\renewcommand{\P}{{\mathbb{P}}}
\DeclareMathOperator{\CB}{CB}
\DeclareMathOperator{\irr}{irr}
\def\Sc{{\mathcal{S}}}
\def\Oc{{\mathcal{O}}}
\def\cl{{\colon}}
\def\ra{{\rightarrow}}
\def\dra{{\dashrightarrow}}
\def\Xbar{{\overline{X}}}
\def\BVA{{\mathrm{BVA}}}
\def\mfd{{\mathrm{mfd}}}
\def\gon{{\mathrm{gon}}}
\def\Hilb{{\mathrm{Hilb}}}
\def\Big{{\mathrm{Big}}}
\def\N{{\mathbb{N}}}
\def\R{{\mathbb{R}}}
\def\Z{{\mathbb{Z}}}
\def\Q{{\mathbb{Q}}}
\def\FC{{\mathrm{FC}}}
\def\MFC{{\mathrm{MFC}}}
\def\Mori{{\mathrm{Mori}}}
\def\NS{{\mathrm{NS}}}
\def\SL{{\mathrm{SL}}}
\title{Minimal degree fibrations in curves and the asymptotic degree of irrationality of divisors}
\theoremstyle{plain}
\newtheorem{theorem}{Theorem}
\newtheorem{proposition}[theorem]{Proposition}
\newtheorem{corollary}[theorem]{Corollary}
\newtheorem{lemma}[theorem]{Lemma}
\newenvironment{manualtheorem}[1]{%
  \manualtheoreminner
}{\endmanualtheoreminner}
\theoremstyle{definition}
\newtheorem{definition}[theorem]{Definition}
\newtheorem{example}[theorem]{Example}
\theoremstyle{remark}
\newtheorem{remark}[theorem]{Remark}
\newtheorem{question}[theorem]{Question}
\numberwithin{figure}{section}
\numberwithin{theorem}{section}
\numberwithin{equation}{section}
\author{Jake Levinson}
\address{
	Mathematics Department \\
	Simon Fraser University \\
	Burnaby, BC V5A 1S6 \\
	Canada}
\email{jake\_levinson@sfu.ca}
\author{David Stapleton }
\address{
	Mathematics Department \\
	University of Michigan \\
	Ann Arbor, MI 48109 \\
	USA}
\email{dajost@umich.edu}
\author{Brooke Ullery}
\address{
	Mathematics Department \\
	Emory University \\
	Atlanta, GA 30322 \\
	USA}
\email{bullery@emory.edu}
\thanks{During the preparation of this article the first author was partially supported by NSERC Discovery Grant RGPIN 2021-01469 and the second author was partially supported by NSF grant FRG-1952399.}
\begin{document}

\maketitle

\section*{Introduction}
\thispagestyle{empty}

The purpose of this paper is to relate the geometry of a projective variety $Y\subset \P^N$ to the degree of irrationality of divisors $X\subset Y$ of large degree.

Recall, for an $n$-dimensional variety $X$ the \textit{degree of irrationality of $X$} is
\[
\irr(X):=\min\{\delta>0 \mid \exists \phi\cl X\dra \P^n \text{ with }\deg(\phi)=\delta\}.
\]
Computing the degree of irrationality has recently attracted a great deal of interest (\cite{BCD14,BDELU,BY22,Chen21,SU20}). A feature of much of the previous work computing the degree of irrationality for divisors $X\subset Y$ has been that the low degree maps from $X$ to projective space factor through fibrations of $Y$ in curves,
\[
\begin{tikzcd}
X \arrow[dashed,d,swap,"\phi"] \arrow[hook,r] & Y \arrow[dashed, dl, "\psi"] \\ \mathbb{P}^n
\end{tikzcd}
\]
where $\phi$ is generically finite and $\psi$ is rationally fibered in curves. The prototype of such a result is due to M. Noether \cite{Noether1882} (Hartshorne \cite{HartshornedoesNoether} gave a modern treatment) who proved that any gonality map of a smooth, degree $d\ge 2$ plane curve is given by projection from a point -- and thus the fibers lie on lines in $\P^2$. In this paper we show this principle holds in a fairly general setting. As a consequence, we give strong bounds on $\irr(X)$ in terms of a natural invariant of the polarized variety $Y$.

Specifically, for a polarized variety $(Y,H)$ of dimension $n+1$ we associate a notion of degree to maps with positive dimensional fibers. If $\phi\cl Y\dra \P^n$ we define the \textit{$H$-degree} of $\phi$ as
\[
\deg_H(\phi) := \deg(H\cdot[C_\phi])
\]
where $C_\phi$ is the closure of a general fiber of $\phi$. Similarly to the degree of irrationality, for any pair we define the \textit{minimal fibering degree of $(Y,H)$} to be:
\[
\mfd(Y,H) := \min\{e>0 \mid \exists \phi\cl Y\dra \P^{n} \text{ with }\deg_H(\phi)=e\},
\]
i.e. it is the minimal degree $e$ such that $Y$ is rationally fibered in degree $e$ curves over $\P^n$.

\begin{manualtheorem}{A}\label{thmA} Let $Y$ be a smooth projective variety of dimension $n+1$ with a fixed ample divisor $A$. Fix an effective divisor $E$ on $Y$. Assume $d$ is sufficiently large (depending on $Y$, $A$, and $E$).
\begin{enumerate}
\item If $X\in |dA+E|$ is any smooth divisor and $\phi\cl X\dra \P^n$ is a rational map such that $\deg(\phi)= \irr(X)$ then there is a minimal $A$-degree fibration $\psi\cl Y\dra \P^n$ such that $\phi$ factors through $\psi$.
\item As a consequence:
\[
d\cdot \mfd(Y,A) - O(1) \le \irr(X) \le d\cdot \mfd(Y,A)+O(1).
\]
(If $E = 0$, then $\irr(X)\le d\cdot \mfd(Y,A)$.)
\item If every minimal degree fibration of $(Y,A)$ is regular (i.e. it is equivalent to a regular map after possible postcomposing with a Cremona transformation), then
\[
\irr(X) = \mfd(Y,X).
\]
\end{enumerate}
\end{manualtheorem}

It follows quickly that the degree of irrationality of a general complete intersection of hypersurfaces of unbalanced degrees $0\ll d_1\ll d_2\ll \cdots \ll d_r$ grows like the product of the degrees $d_1 \cdots d_r$.

\begin{manualtheorem}{B}\label{thmB}
Let $\epsilon >0$. Suppose $0\ll d_1\ll d_2\ll\dots \ll d_r$ are sufficiently unbalanced (depending on $\epsilon$) and let $X\subset \P^{N}$ be a general complete intersection of hypersurfaces of type $(d_1,\cdots,d_r)$, then
\[
(1-\epsilon)d_1\cdots d_r \le \irr(X)\le d_1\cdots d_r.
\]
\end{manualtheorem}

This gives an answer to a problem posed by Bastianelli et al. \cite[Prob. 4.1]{BDELU}. Previously, the problem of computing the degree of irrationality of complete intersections has been studied by Chen (\cite{Chen21}) in the case of codimensions and dimensions 2, and the second two authors (\cite{SU20}) for $(2,d)$ complete intersections and $(3,d)$ complete intersection surfaces/threefolds. Chen was able to establish multiplicative bounds on the degree of irrationality -- and in fact the covering gonality -- by giving interesting bounds on the multipoint Seshadri constants of divisors and complete intersection threefolds. Theorem~\ref{thmB} also gives a new proof (in the case of general curves of unbalanced degrees) that the gonality of a complete intersection curve is roughly the degree of the curve (\cite{HCU20,LazLecs}).

Our results use a recent theorem of Banerjee \cite{Banerjee} who studies when sets of points satisfying the Cayley--Bacharach condition lie on low degree curves. Let $X$ be a variety with linear system $\Lambda$. Recall that a set of points
\[
\Gamma =\{P_1,\dots,P_\gamma\}\subset X
\]
satisfies the \textit{Cayley--Bacharach condition} with respect to a linear system $\Lambda$ if for every point $P\in \Sc$: a divisor $D\in \Lambda$ contains $S$ $\iff$ $D$ contains $S\setminus P$. In the case $\Gamma\subset \P^N$ and $\Lambda$ is the complete linear system of $\Oc(r)$ we say \textit{$\Gamma$ satisfies $\CB(r)$.} Banerjee proves:

\begin{theorem}[\cite{Banerjee}]\label{Banerjee}
Let $\Gamma\subset \P^N$ be a set of points in a projective space satisfying $\CB(r)$. There are non-decreasing functions $G(e)$ and $H(e)$ such that if $r\ge G(e)$ and $|\Gamma|\le (e+1)r-H(e)$ then $\Gamma$ lies on a curve of degree $e$.
\end{theorem}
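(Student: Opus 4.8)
The plan is to extract from the Cayley--Bacharach hypothesis two lower bounds for $|\Gamma|$ — one governed by the geometry of the minimal-degree curve through $\Gamma$, the other by the fact that $\Gamma$ lies on no curve of smaller degree — and to show that these are jointly incompatible with $|\Gamma|\le(e+1)r-H(e)$ unless $\Gamma$ already lies on a curve of degree $e$. Write $h_\Gamma(j)$ for the number of conditions $\Gamma$ imposes on $|\mathcal O_{\P^N}(j)|$.

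First I would record the elementary consequences of $\CB(r)$. Since a form of degree $r$ through $\Gamma\setminus P$ vanishes at $P$ for every $P\in\Gamma$, the set $\Gamma$ imposes fewer than $|\Gamma|$ conditions on $|\mathcal O(r)|$, i.e.\ $h_\Gamma(r)\le|\Gamma|-1$. Next, $\CB(r)$ is unchanged if we replace $\P^N$ by the linear span of $\Gamma$ (every form on the span extends), and a degree-$e$ curve through $\Gamma$ in the span is one in $\P^N$; so I may assume $\Gamma$ is nondegenerate in some $\P^M$. Applying the classical bound $h_\Gamma(j)\ge\min\{|\Gamma|,\,Mj+1\}$ for nondegenerate finite subschemes at $j=r$ and using $h_\Gamma(r)\le|\Gamma|-1$ forces $M\le e$: were $M\ge e+1$ we would get $|\Gamma|\ge Mr+2\ge(e+1)r+2$, contradicting the size hypothesis. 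Thus the ambient dimension is bounded in terms of $e$ alone, which is what makes the remaining analysis uniform.

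Now let $\tau$ be the least degree of a curve $C\subseteq\P^M$ containing $\Gamma$; since $\Gamma$ is reduced so is $C$, and the aim is to show $\tau\le e$. Suppose instead $\tau\ge e+1$. \emph{First bound} (valid for $e+1\le\tau\le r/2$): since $r$ then exceeds the regularity of $C$ — at most $\tau$ for a reduced curve of degree $\tau$ — the restriction $H^0(\mathcal O_{\P^M}(r))\to H^0(\mathcal O_C(r))$ is surjective and $\CB(r)$ descends to ``$\Gamma$ satisfies Cayley--Bacharach on $C$ with respect to $\mathcal O_C(r)$''. A short exact sequence argument on each connected component of $C$ (each of which meets $\Gamma$ by minimality of $\tau$) forces $h^1(\mathcal O_C(r)(-\Gamma))\ge 1$; combined with $\deg\mathcal O_C(r)=r\tau$, $\deg\omega_C=2p_a(C)-2$, Riemann--Roch, and $p_a(C)\le\binom{\tau-1}{2}$, this yields $|\Gamma|\ge r\tau-(\tau-1)(\tau-2)-O(\tau)$, which throughout the range is $\ge(e+1)r-O(e^2)$. \emph{Second bound} (valid for $\tau\ge r/2$): any $N$ points of $\P^M$ lie on a curve of degree $O(N^{(M-1)/M})$ (cut out by $M-1$ general hypersurfaces of degree $O(N^{1/M})$ through them), so $\tau\le O(|\Gamma|^{(M-1)/M})$ and hence $|\Gamma|\ge c_M\,\tau^{M/(M-1)}\ge c_M(r/2)^{M/(M-1)}$, which exceeds $(e+1)r$ once $r$ is large. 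In either range $|\Gamma|\ge(e+1)r-O(e^2)$; taking $H(e)$ larger than this $O(e^2)$ and $G(e)$ large enough to justify the ``$r$ large'' assertions contradicts $|\Gamma|\le(e+1)r-H(e)$. Therefore $\tau\le e$, and adjoining $e-\tau$ general lines produces a curve of degree exactly $e$ through $\Gamma$.

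The step I expect to be the main obstacle is making the first bound quantitatively airtight when the minimal curve $C$ is reducible, singular, or non-Gorenstein: one has to treat Serre duality and sections vanishing on subcurves carefully (this is the source of the $O(\tau)$ correction above), bound the regularity of such a $C$ uniformly, and then check that every error term really is $O(e^2)$, uniformly over $M\le e$ and over the relevant range of $\tau$, so that $G(e)$ and $H(e)$ emerge as explicit non-decreasing functions of $e$. The rest is bookkeeping.
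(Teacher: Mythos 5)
The paper does not prove this statement: it is Banerjee's theorem, imported as a black box from \cite{Banerjee} (a paper in preparation), so there is no internal proof to compare yours against, and your proposal has to stand as a self-contained proof of what is the main theorem of a separate paper.

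It does not. The decisive gap is the claim that the linear span of $\Gamma$ has dimension $M\le e$. You derive this from the bound $h_\Gamma(j)\ge\min\{|\Gamma|,\,Mj+1\}$, but that inequality is Castelnuovo's lemma for points in \emph{linearly general} position; for an arbitrary nondegenerate finite set one only has $h_\Gamma(j)\ge\min\{|\Gamma|,\,M+j\}$, which combined with $h_\Gamma(r)\le|\Gamma|-1$ gives no bound on $M$ independent of $r$. Worse, the intermediate conclusion $M\le e$ is itself false under the hypotheses of the theorem: take $e$ pairwise disjoint lines $L_1,\dots,L_e$ spanning a $\P^{2e-1}\subseteq\P^N$ and put $r+2$ points on each. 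The resulting $\Gamma$ satisfies $\CB(r)$ (a degree-$r$ hypersurface through all but one point of some $\Gamma\cap L_i$ still meets $L_i$ in at least $r+1$ points, hence contains $L_i$), has $|\Gamma|=e(r+2)\le(e+1)r-H(e)$ once $r\ge 2e+H(e)$, lies on the degree-$e$ curve $\bigcup L_i$ exactly as the theorem predicts, yet spans a $\P^{2e-1}$. Since every subsequent estimate in your argument (the regularity and genus bounds, the constant $c_M$ in the second regime, and the very definitions of $G(e)$ and $H(e)$) is quantified ``uniformly over $M\le e$,'' the proof does not close; a correct argument must bound $M$ by roughly $2e-1$ by some other mechanism and, more importantly, must handle precisely these reducible, piecewise-degenerate configurations, which is where the real difficulty of Banerjee's theorem lives.

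Two smaller problems point the same way. In your first regime, $p_a(C)$ for a reducible reduced curve of degree $\tau$ can be of size roughly $\tau^2$ (component genera plus up to $\sum_{i<j}\tau_i\tau_j$ intersection points), so $r\tau-2p_a(C)$ need not be increasing on all of $[e+1,r/2]$ and the cutoff between your two regimes must move. In your second regime, $M-1$ general degree-$k$ hypersurfaces through $\Gamma$ need not cut out a curve, because the base locus of $|\mathcal I_\Gamma(k)|$ can be two-dimensional (e.g.\ $\Gamma$ generic inside a plane in $\P^M$ with $|\Gamma|>\binom{k+2}{2}$). These are repairable with more care; the dimension bound is the genuine gap.
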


\noindent This is related to recent work of Picoco, which originally inspired this project. Picoco asks:

\begin{question}\label{PicocoQ}
(\cite[Ques. 1.1]{Picoco}) If $\Gamma$ satisfies $\CB(r)$ and $|\Gamma|\le (e+1)r-(e^2-e-1)$, does $\Gamma$ lie on a degree $e$ curve in $\P^N$?
\end{question}

\noindent Here we view Banerjee's result as an asymptotic answer to Picoco's question. In other words, if the answer to Question~\ref{PicocoQ} is yes, then in Banerjee's theorem we may take $G(e) = 0$ and $H(e) = e^2-e-1$.

In \S1 we introduce the minimal fibering degrees and provide some basic properties and first examples. In \S2 we recall the Cayley--Bacharach condition and some basic results about equations defining curves. In \S3 we prove Theorem~\ref{thmA}. In \S4 we prove Theorem~\ref{thmB} and compute some low degree examples effectively using Picoco's work.\\

\noindent\textit{Acknowledgments.} We thank Ishan Banerjee for sharing their work in preparation with us, and for enlightening conversations about their result. We thank Nathan Chen, Joe Harris, Robert Lazarsfeld, and Alex Perry for helpful conversations and suggestions.

\section{Minimal Fibering Degrees}

In this section we introduce the \textit{minimal fibering degree} of a pair (denoted $\mfd(Y,H)$) and describe some basic properties. We compute the functions for some basic examples.

\begin{definition}
Let $(Y,H)$ be a pair of $Y$ a smooth, projective variety of dimension $n+1$ and $H$ an effective divisor on $Y$. Define the \textit{$H$-degree} of a map $\phi\cl Y\dra \P^m$ to be
\[
\deg_H(\phi) := \deg(H^{(n+1)-m}\cap Y_b)
\]
where $Y_b$ is the closure of a general fiber of $\phi$.
\end{definition}

\begin{example}
In the case $m=n+1$, the $H$-degree is the same as the degree of $\phi$. However, if $X\in |H|$ is a hypersurface, and
\[
\psi\cl Y\dra \P^n
\]
is a rational map, it can happen that $\deg_H(\psi) \ne \deg (\psi|_X)$. For example, if we consider projection from a point on a smooth degree $d$ hypersurface $X\subset \P^{n+1}$. Then $\deg_H(\psi) = d$, whereas $\deg(\psi|_X) = d-1.$
\end{example}

\begin{lemma}\label{projectbound}
Assume that $Y\subset \P^N$ is a projective variety and there is a map
\[
\phi\cl Y\dra \P^m
\]
of relative dimension $\ell$ and $H$-degree $e$. If $m<n+1$ then there is a map $\phi'\cl Y\dra \P^{m+1}$ with $\deg_H(\phi') = e$.
\end{lemma}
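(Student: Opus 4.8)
The plan is to reduce the dimension of the target by one at a time, going the opposite direction from what the statement literally says: we are given a fibration $\phi \cl Y \dra \P^m$ with $m < n+1$, so the general fiber $Y_b$ has positive dimension $\ell = n+1-m \ge 1$, and we want to produce $\phi' \cl Y \dra \P^{m+1}$ with the same $H$-degree. The natural idea is to further fiber $Y_b$: the general fiber is itself a projective variety (inheriting the embedding $Y \subset \P^N$, hence a polarization $H|_{Y_b}$), and we want a rational map $Y_b \dra \P^1$ whose general fiber $Z$ is a curve with $\deg(H^{\ell-1} \cap Z) = e$, i.e. $Z$ has the same $H$-degree as $Y_b$ had. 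Combining such a fibering on the general fiber with $\phi$ itself should give the desired map $\phi'\cl Y \dra \P^m \times \P^1 \dashrightarrow \P^{m+1}$ (composing with a birational map $\P^m\times\P^1 \dra \P^{m+1}$, e.g. a Segre-type or linear-projection construction, or simply noting $\P^m\times\P^1$ and $\P^{m+1}$ are birational).

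First I would set up the general fiber $F = Y_b$, which for general $b$ is a smooth (or at least irreducible, reduced) projective variety of dimension $\ell = n+1-m \ge 1$, embedded in $\P^N$ with hyperplane class $H|_F$, and with $H$-degree of the identity map equal to $e$ in the sense that $\deg(H^{\ell-1}\cap Z) = e$ where $Z$ ranges over fibers of a further fibration — wait, more precisely we should track that $\deg_H(\phi) = \deg(H^{\ell}\cap F) \cdot (\text{something})$; actually by definition $\deg_H(\phi) = \deg(H^\ell \cap F) = e$ directly. So $F \subset \P^N$ is a variety of dimension $\ell$ and degree $e$ with respect to $H$. The key step is then: \emph{any} positive-dimensional projective variety $F \subset \P^N$ admits a rational map $\pi \cl F \dra \P^1$ whose general fiber $Z$ satisfies $\deg(H^{\ell -1} \cap Z) = \deg(H^\ell \cap F)$. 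This is achieved by projecting from a general linear subspace: take a general hyperplane section repeatedly, or better, project $F$ from a general codimension-$2$ linear space $\Lambda \cong \P^{N-2} \subset \P^N$ to get a map $\pi \cl F \dra \P^1$; the general fiber is $F \cap \langle \Lambda, \ell \rangle$ for $\ell$ a point of $\P^1$, i.e.\ $F$ intersected with a hyperplane containing $\Lambda$. Since the hyperplanes through $\Lambda$ form a pencil, the general fiber $Z$ is a general hyperplane section of $F$, so it has dimension $\ell - 1$ and $H$-degree $\deg(H^{\ell-1}\cap Z) = \deg(H \cdot H^{\ell-1} \cap F) = \deg(H^\ell \cap F) = e$.

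Then I would assemble $\phi'$: on the generic fiber of $\phi$, use the pencil construction above (uniformly in $b$, which works because we can choose the center $\Lambda$ of projection globally — project $Y$ from $\Lambda$ to get $Y \dra \P^1$, and take the product map $(\phi, \mathrm{pr}_\Lambda)\cl Y \dra \P^m \times \P^1$). The general fiber of this product map is $Y_b \cap (\text{hyperplane through }\Lambda)$, a general hyperplane section of $Y_b$, with $H$-degree $e$ as computed. Finally, since $\P^m \times \P^1$ is rational of dimension $m+1$, compose with a birational map $\P^m \times \P^1 \dra \P^{m+1}$ to land in projective space; this last composition is birational so does not change the general fiber, hence $\deg_H(\phi') = e$. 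One should double-check that the fibers of the product map are genuinely $(\ell-1)$-dimensional and not smaller — this holds for general $\Lambda$ as long as $\phi$ is not already "trivial" on directions captured by $\Lambda$, but since $m < n+1$ we do have $\ell \ge 1$ so there is room, and a dimension count shows a general hyperplane section of the general fiber is nonempty of the expected dimension.

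The main obstacle I anticipate is the bookkeeping around genericity and irreducibility: one must ensure that for general $b \in \P^m$ the fiber $Y_b$ is irreducible and reduced of the expected dimension, and that a general hyperplane section of it remains irreducible (or at least that its $H$-degree is computed correctly even if it breaks up — in which case "general fiber" should be interpreted as an irreducible component, and a Bertini-type argument keeps it irreducible when $\dim Y_b \ge 2$, while for $\dim Y_b = 1$, i.e.\ $m = n$, the construction is vacuous since we cannot further fiber a curve and indeed the hypothesis $m < n+1$ with the conclusion $m+1 \le n+1$ is still consistent — here $Z$ is a general set of $e$ points and $\phi'$ is generically finite of degree... no: when $\ell = 1$, $F$ is a curve of degree $e$, projecting from $\Lambda = \P^{N-2}$ gives a degree-$e$ map $F \to \P^1$, and the "fiber" $Z$ is $e$ points with $\deg(H^0 \cap Z) = e$, consistent with $\deg_H(\phi') = e$ where now $\phi'\cl Y \dra \P^{n+1}$ is generically finite of $H$-degree = degree $= e$). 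So the $\ell = 1$ base case is exactly the statement that projection from a general center gives a degree-$e$ rational map, and the inductive mechanism is just iterated general hyperplane sections. I would present the projection-from-$\Lambda$ construction as the single clean step and relegate the genericity verifications to a short Bertini citation.
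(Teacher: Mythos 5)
Your construction is correct and is essentially identical to the paper's proof: the paper likewise forms the product map $Y \dra \P^N \times \P^m$, composes with a generic linear projection $\P^N \dra \P^1$ (i.e.\ projection from a general codimension-two center), identifies $\P^1 \times \P^m$ birationally with $\P^{m+1}$, and observes that the general fiber becomes a hyperplane section of $Y_b$, hence has the same $H$-degree.
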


\begin{proof}
With $\phi\cl Y\dra \P^m$ as above let $Y_b$ denote the closure of a general fiber. Consider the induced map:
\[
Y\dra \P^N\times \P^m.
\]
Let $\P^N\dra \P^1$ be a generic linear projection and fix a birational isomorphism $\P^1\times \P^m\simeq_{\mathrm{bir}}\P^{m+1}$. The closure of the general fiber of the composition
\[
\begin{tikzcd}
Y \arrow[d,dashed]\arrow[bend left=15,drr,dashed,"\phi'"] &&\\
\P^N\times \P^m \arrow[r,dashed]& \P^1\times \P^m\arrow[r,"\simeq"] &\P^{m+1}
\end{tikzcd}
\]
is a hyperplane section of $Y_b$. Thus $\deg_H(\phi') = \deg_H(\phi).$
\end{proof}

\begin{definition}
For an $n+1$ dimensional pair $(Y,H)$ and any integer $0\le \ell\le n+1$, the \textit{minimal fibering degree in dimension $\ell$} is the number
\[
\mfd_\ell(Y,H):=\min\{\deg_H(\phi)>0 \mid \phi\cl Y\dra \P^{(n+1)-\ell}\}.
\]
We will largely work with $\mfd_1(Y,H)$ which we just call \textit{the minimal fibering degree of $(Y,H)$} and we denote it by $\mfd(Y,H)$.
\end{definition}

\begin{definition}
It will be convenient to talk about the set of \textit{fiber classes of $Y$}, is a subset of the Mori cone of curves in $Y$:
\[
\FC(Y) := \left\{ [C_\psi] \in \Mori(Y) \ \middle|  \begin{array}{l}
C_\psi\text{ is the closure of a general fiber}\\
\text{of a dominant map }\psi\cl Y\dra \P^n
\end{array} \right\}\subset \Mori(Y).
\]
For any effective divisor $H$, we have
\[
\mfd(Y,H) = \min\{ H\cdot [C_\psi] \mid  [C_\psi]\in \FC(Y)\}.
\]
Accordingly, we define the set of \emph{minimal-degree fiber classes of $Y$ with respect to $H$},
\[
\MFC(Y,H) := \left\{ [C_\psi] \in \FC(Y) \mid \mfd(Y,H) = H \cdot [C_\psi] \right\}
\]
\end{definition}

\begin{proposition}[Elementary Properties]\label{mfdelemprops} Let $(Y,H)$ be a pair as above.
\begin{enumerate}
\item $\mfd(Y,dH) = d \mfd(Y,H).$
\item $\mfd(Y,H)\ge 0$. (More generally, if $H\ge H'$ then $\mfd(Y,H) \ge \mfd(Y,H').$)
\item The minimal fibering degree extends to a continuous, piecewise linear function on the big cone: 
\[
\mfd\cl \Big(Y) \ra \R
\]
\item If $H_d = H+\frac{1}{d}E$ for some big $\Z$-divisor $H$ and some effective divisor $E$, then for $d\gg 0$ there is an inclusion
\[
\MFC(Y,H_d) \subset \MFC(Y,H).
\]
\item If there are effective divisors $H_1, \dots, H_n\subset Y$ such that $\bigcap_{i=1}^n\MFC(Y,H_i) \ne \emptyset$ then for all $\alpha_1,\dots,\alpha_n>0$:
\[
\MFC(Y,\alpha_1H_1+\cdots + \alpha_nH_n) = \bigcap\limits_{i=1}^n\MFC(Y,H_i).
\]
In particular, the function $\mfd(Y, -)$ is linear on the convex hull of $H_1, \ldots, H_n$, and for any $[C] \in \bigcap_{i=1}^n \MFC(Y, H_i)$,
\[
\mfd(Y, \alpha_1 H_1 + \cdots + \alpha_n H_n) = (\alpha_1 H_1 + \cdots + \alpha_n H_n) \cdot [C].
\]

\item If $H$ is very ample then:
\[
\mfd_0(Y,H) \le \mfd_1(Y,H)\le \cdots \le \mfd_{n+1}(Y,H).
\]
Moreover, $\irr(Y) = \mfd_0(Y,H)$ and $\mfd_{n+1}(Y,H)$ is the degree of $Y$ in $\P^N$.

\end{enumerate}
\end{proposition}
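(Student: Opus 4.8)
The plan is to read off all six assertions from the formula $\mfd(Y,H)=\min\{H\cdot[C]\mid[C]\in\FC(Y)\}$, using throughout that a general fiber of a dominant map $Y\dra\P^{n}$ passes through a general point of $Y$, so that $D\cdot[C]\ge 0$ for every effective divisor $D$ and every $[C]\in\FC(Y)$. With that observation, parts (1), (2) and (5) are purely formal. For (1), $\FC(Y)$ does not depend on the polarization and $(dH)\cdot[C]=d\,(H\cdot[C])$. For (2), $H$ effective gives $H\cdot[C]\ge 0$ for all $[C]\in\FC(Y)$, hence $\mfd(Y,H)\ge 0$; and if $H-H'$ is effective then $H\cdot[C]\ge H'\cdot[C]\ge\mfd(Y,H')$ for all $[C]\in\FC(Y)$, so $\mfd(Y,H)\ge\mfd(Y,H')$. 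For (5), I would fix $[C_0]\in\bigcap_i\MFC(Y,H_i)$; for arbitrary $[C]\in\FC(Y)$ and $\alpha_i>0$, summing $\alpha_i\,(H_i\cdot[C])\ge\alpha_i\,(H_i\cdot[C_0])=\alpha_i\,\mfd(Y,H_i)$ over $i$ shows $[C_0]$ computes $\mfd(Y,\sum_i\alpha_iH_i)$; and, the $\alpha_i$ being strictly positive, equality in that sum forces $H_i\cdot[C]=\mfd(Y,H_i)$ for every $i$ — which identifies $\MFC(Y,\sum_i\alpha_iH_i)$ with $\bigcap_i\MFC(Y,H_i)$, gives the linearity of $\mfd(Y,-)$ on the convex hull of the $H_i$, and shows $\mfd(Y,\sum_i\alpha_iH_i)=(\sum_i\alpha_iH_i)\cdot[C]$ for any such $[C]$.

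Part (3) carries the real content, and the finiteness input here is what I expect to be the main obstacle. Fix an ample divisor $A$. The crucial point is that for every $M$ the set $\{[C]\in\FC(Y)\mid A\cdot[C]\le M\}$ is finite, because $\Mori(Y)\cap\{\beta\mid A\cdot\beta\le M\}$ is compact and contains only finitely many classes of $N_1(Y)_{\Z}$. Given a big class $H_1$ and any $[C_0]\in\FC(Y)$, pick $c>0$ with $H_1-cA$ still big; writing $H_1=cA+(\text{ample})+(\text{effective})$ yields $H_1\cdot[C]\ge c\,(A\cdot[C])$ for all $[C]\in\FC(Y)$. In particular $A\cdot[C_0]\le\tfrac1c(H_1\cdot[C_0])$, while any $[C]$ with $A\cdot[C]>\tfrac1c(H_1\cdot[C_0])$ cannot compute $\mfd(Y,H_1)$; hence $\mfd(Y,H_1)$ is the minimum of $H_1\cdot[C]$ over the finite set $\mathcal F=\{[C]\in\FC(Y)\mid A\cdot[C]\le\tfrac1c(H_1\cdot[C_0])\}$. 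Openness of the big cone and continuity of intersection numbers make this uniform: after slightly enlarging the bound there is a neighborhood of $H_1$ in $\Big(Y)$ and a fixed finite $\mathcal F'\subset\FC(Y)$ with $\mfd(Y,H)=\min_{[C]\in\mathcal F'}H\cdot[C]$ for all $H$ in it. A minimum of finitely many linear functionals is continuous and piecewise linear, and these local descriptions glue, proving (3).

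Part (4) then follows by combining (3) with an integrality observation. For $d\gg 0$ the class $H_d=H+\tfrac1dE$ lies in a neighborhood of $H$ of the above type, so $\mfd(Y,H_d)=\min_{[C]\in\mathcal F'}H_d\cdot[C]$ for a fixed finite $\mathcal F'\subset\FC(Y)$, which I may also assume contains $\MFC(Y,H)$. Since $H$ is an integral divisor, $H\cdot[C]\ge\mfd(Y,H)+1$ for every $[C]\in\mathcal F'\setminus\MFC(Y,H)$, whereas $0\le E\cdot[C]\le K$ on the finite set $\mathcal F'$ for some constant $K$. Hence once $d>K$, testing against $H_d$, every class of $\MFC(Y,H)$ strictly beats every class of $\mathcal F'\setminus\MFC(Y,H)$, so $\mfd(Y,H_d)$ is computed only by classes of $\MFC(Y,H)$, i.e.\ $\MFC(Y,H_d)\subset\MFC(Y,H)$.

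For (6), use that $H$ very ample embeds $Y\subset\P^N$ with $H$ the hyperplane class. The chain $\mfd_0\le\mfd_1\le\cdots\le\mfd_{n+1}$ comes from iterating Lemma~\ref{projectbound}: if $\phi\cl Y\dra\P^{(n+1)-\ell}$ of relative dimension $\ell\ge 1$ computes $\mfd_\ell(Y,H)$, then, since $m=(n+1)-\ell<n+1$, the lemma supplies $\phi'\cl Y\dra\P^{(n+1)-(\ell-1)}$ of relative dimension $\ell-1$ with $\deg_H(\phi')=\deg_H(\phi)=\mfd_\ell(Y,H)$, so $\mfd_{\ell-1}(Y,H)\le\mfd_\ell(Y,H)$. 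The endpoints are definitional: for $\ell=0$ the competitors are dominant generically finite maps $Y\dra\P^{n+1}$, whose $H$-degree is their degree, so $\mfd_0(Y,H)=\irr(Y)$; for $\ell=n+1$ the only competitor is the constant map to a point, whose general fiber is all of $Y$, so $\mfd_{n+1}(Y,H)=\deg(H^{n+1}\cap Y)=H^{n+1}$, the degree of $Y$ in $\P^N$.
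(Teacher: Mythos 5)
Your proposal is correct and follows essentially the same route as the paper: parts (1), (2), (5), and (6) are the same formal arguments, and the key finiteness input for (3)--(4) --- that only finitely many fiber classes can have bounded degree against a big class --- is established just as in the paper, except that you deduce it from Kleiman compactness of $\{\beta \in \Mori(Y) \mid A\cdot\beta \le M\}$ for an auxiliary ample $A$ together with the decomposition $H_1 = cA + (\text{ample}) + (\text{effective})$, where the paper invokes duality with the movable cone directly; likewise in (4) you bound $E\cdot[C]$ over a finite set near $H$ where the paper runs an equivalent integer-gap argument on $\MFC(Y,H)$. These are cosmetic variations, not a different proof.
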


\begin{proof}
(1) is straightforward from the definitions. (2) follows from the observation that the fibers of any map $\psi\cl Y\dra \P^n$ cover $Y$.

To prove (3), first we can extend to the rational divisor classes by (1). By duality, any $\R$-divisor class $\alpha \in \Big(Y)$ induces a linear map
\[
\alpha\cl \FC(Y) \ra \R^{> 0}.
\]
Define $\mfd(\alpha)$ to be the minimal $\R$-value obtained by this function. Note that this is actually a minimum. The big cone of $Y$ is the interior of the pseudoeffective cone which is dual to the movable cone of $Y$. All the curve classes are integral classes inside the movable cone, and these are pointed, full-dimensional cones. So for any real number $\lambda$, there are finitely curve classes $[C_\psi]$ such that $\alpha([C_\psi]) \le \lambda$. Now if we vary the class of $\alpha$ continuously, the images of the fiber classes $\FC(Y)$ also vary continuously, and thus so does the minimum. Lastly, it is straightforward to show that for $\alpha = H$ (a big integral divisor), then $\mfd(\alpha) = \mfd(Y,H)$. To show it is piecewise linear, for each $[C_\psi]\in \FC(Y)$ there is a (possibly empty) subcone of $\Big(Y)$ where $\alpha\cdot [C_\psi ] =\mfd(\alpha)$ and on this subset, $\mfd$ is linear by duality. Lastly, $\Big(Y)$ is a (possibly countable) union of such subsets.

To prove (4), suppose that $[C_\psi]\in \FC(Y)$ is an $H_d$-minimal class that is not $H$-minimal. Then there is an $H$-minimal class $[C_{\psi'}]\in \FC(Y)$ which satisfies:
\[
H_d\cdot [C_{\psi'}]\ge H_d\cdot [C_\psi] \ge H\cdot [C_\psi] > H \cdot [C_{\psi'}].
\]
The strict inequality is of integers. This implies
\[
E \cdot [C_{\psi'}] \ge d.
\]
We know there are finitely many classes $[C_{\psi'}]\in \MFC(Y,H)$. Thus, once $d> E \cdot [C_{\psi'}]$ for all such minimal classes, we are done.

To prove (5), let
\[
[C_{\psi}]\in \bigcap\limits_{i=1}^n\MFC(Y,H_i)\text{ and let }[C_{\phi}] \in \MFC(Y,\alpha_1H_1+\cdots +\alpha_nH_n).
\]
Then:
\begin{align*}
(\alpha_1H_1+\cdots +\alpha_nH_n)\cdot[C_\phi] & \le (\alpha_1H_1+\cdots +\alpha_nH_n)\cdot [C_\psi]\\
& = \sum \alpha_i (H_i\cdot[C_\psi])\\
& \le \sum \alpha_i (H_i\cdot [C_\phi]) = (\alpha_1H_1+\cdots +\alpha_nH_n)\cdot[C_\phi].
\end{align*}
This implies all the degrees $H_i\cdot[C_\psi] = H_i\cdot[C_\phi]$, and the result follows.

In (6), the inequalities follow from Lemma~\ref{projectbound}, and the rest are straightforward.
\end{proof}

\begin{example}
Suppose $Y$ is a surface with Picard rank 1, i.e. $\NS(Y) = \Z\cdot H$. If $a$ is the smallest integer such that $aH$ moves in a pencil then $\mfd(Y,H) = a H^2$.
\end{example}

\begin{corollary}[Corollary of Thm.~\ref{thmA}]
Let $Y$ be as above. If $d$ is sufficiently large, and $X\in |dH|$ is smooth of genus $g$ then
\[
\gon(X) \approx a\cdot d \cdot H^2 \approx C \sqrt{g}.
\]
\end{corollary}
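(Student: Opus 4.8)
The plan is to read this off from Theorem~\ref{thmA} applied with $A=H$ and $E=0$, together with the adjunction formula. Since $X$ is a smooth projective curve, every rational map $X\dra\P^1$ extends to a morphism, so $\irr(X)=\gon(X)$; and here $n=1$, so $\P^n=\P^1$. The hypotheses of Theorem~\ref{thmA} hold for $d\gg 0$ (with $Y$ our Picard-rank-one surface, $A=H$ its ample generator, and $E=0$), so part~(2) gives
\[
d\cdot\mfd(Y,H)-O(1)\ \le\ \gon(X)\ \le\ d\cdot\mfd(Y,H),
\]
that is, $\gon(X)=d\cdot\mfd(Y,H)+O(1)$ with implied constant depending only on $(Y,H)$.

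Next I would invoke the preceding Example to identify $\mfd(Y,H)=a\,H^2$: a dominant map $Y\dra\P^1$ has general fibers forming a pencil of curves in a class $mH$ with $m\ge 1$ (Picard rank one), and since this class moves in a pencil we must have $m\ge a$, while conversely $aH$ moves in a pencil by definition of $a$ and its members have $H$-degree $aH\cdot H=aH^2$. Hence $\gon(X)=a\,d\,H^2+O(1)$. To convert the factor of $d$ into a factor of $\sqrt g$, apply adjunction on $Y$:
\[
2g-2 \;=\; X\cdot(X+K_Y)\;=\;d^2H^2+d\,(K_Y\cdot H),
\]
so $g=\tfrac12 d^2H^2+O(d)$, and solving the quadratic yields $d=\sqrt{2g/H^2}+O(1)$ as $g\to\infty$. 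Substituting, $\gon(X)=aH^2\sqrt{2g/H^2}+O(1)=\bigl(a\sqrt{2H^2}\bigr)\sqrt g+O(1)$, so the statement holds with the explicit constant $C=a\sqrt{2H^2}$ (and in fact the error is $O(1)$, not merely lower order).

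I do not expect a genuine obstacle here: all of the substance is in Theorem~\ref{thmA}, and the remaining ingredients --- the value of $\mfd(Y,H)$ on a Picard-rank-one surface and the adjunction estimate --- are elementary. The only points requiring care are citing the sharper upper bound $\irr(X)\le d\cdot\mfd(Y,A)$ available when $E=0$, and checking that the $O(d)$ term in adjunction contributes only $O(1)$ after extracting the square root. It is worth remarking that one cannot in general upgrade the approximation to an equality via Theorem~\ref{thmA}(3): a minimal pencil $aH$ on a Picard-rank-one surface typically has base points --- for instance the pencil of lines through a point in $\P^2$ --- so the associated fibration need not be regular, and only the asymptotic statement survives.
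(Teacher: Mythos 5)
Your argument is correct and is exactly the intended derivation: the paper states this as an immediate consequence of Theorem~\ref{thmA}(2) (with $A=H$, $E=0$) combined with the preceding example's identification $\mfd(Y,H)=aH^2$, and the passage from $d$ to $\sqrt{g}$ via adjunction is the routine computation you carried out, yielding $C=a\sqrt{2H^2}$. Your closing caveat about Theorem~\ref{thmA}(3) not applying when the minimal pencil has base points is also apt.
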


\begin{example}
Consider the product of two curves $Y = C_1\times C_2$ such that
\[
\NS(C_1\times C_2) \cong \Z f_1\oplus \Z f_2
\]
(where the $f_i$ represents the fiber of projection onto the $i$th factor). In other words, we are assuming the curves are general with respect to each other in that their Jacobians have no isogeny factors in common. Let $\delta_i$ be the gonality of $C_i$. By the K\"unneth formula, if a line bundle $L \equiv_{\mathrm{num}} a_1 f_1+a_2 f_2$ has no fixed curve and at least two sections then one of the following holds:
\begin{center}
(1) $a_1\ge \delta_1$ and $a_2\ge \delta_2$, (2) $a_1\ge \delta_1$ and $a_2 = 0$, or (3) $a_1 = 0$ and $a_2 \ge\delta_2$.
\end{center}
It follows quickly that the curve classes computing the minimal degree fibrations are either $\delta_1 f_1$ or $\delta_2 f_2$. As a consequence, in the big cone of curves (which is just the cone generated by $f_1$ and $f_2$), the function $\mfd$ is computed by intersecting against $\delta_1 f_1$ or $\delta_2 f_2$.
\end{example}

\begin{corollary}[Corollary of Thm.~\ref{thmA}]
Let $Y = C_1\times C_2$ as above, let $A$ be an ample divisor on $Y$, and let $E$ be an effective divisor on $Y$. Suppose $d$ is sufficiently large. If $X\in |dA + E|$ is smooth?, then any map $\phi\cl X\ra \P^1$ that computes the gonality of $X$ factors through a projection
\[
p_i\cl X\ra C_i
\]
followed by a map computing the gonality of $C_i$.
\end{corollary}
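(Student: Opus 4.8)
The plan is to combine Theorem~\ref{thmA} with the description of $\MFC(Y,A)$ worked out in the preceding Example. Since $\dim Y=2$ we have $n=1$, a dominant map to $\P^n$ is a pencil, and $\irr(X)=\gon(X)$ for the smooth curve $X$; so a map $\phi\cl X\ra\P^1$ computing the gonality is exactly one with $\deg(\phi)=\irr(X)$. For $d\gg 0$, Theorem~\ref{thmA}(1) then says $\phi$ factors through a minimal $A$-degree fibration $\psi\cl Y\dra\P^1$; in particular $\phi$ is constant on a general fiber of $\psi|_X$. It remains to (i) identify $\psi$ with a projection of $Y$ followed by a gonality pencil, and (ii) run the resulting degree count.

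For (i): since $\psi$ is a minimal $A$-degree fibration, $[C_\psi]\in\MFC(Y,A)$, and the preceding Example gives $\MFC(Y,A)\subset\{\delta_1f_1,\delta_2f_2\}$; say $[C_\psi]=\delta_1f_1$ (the other case is symmetric), so $\mfd(Y,A)=\delta_1(A\cdot f_1)$. The class $f_1$ of a fiber of $p_1$ is nef with $f_1^2=0$, and the only irreducible curves $Z\subset Y$ with $Z\cdot f_1=0$ are the fibers of $p_1$ (such a $Z$ is contracted by $p_1$, hence lies in---hence equals---some $\{c\}\times C_2$). A general fiber $\psi^{-1}(t)$ is reduced of class $\delta_1f_1$, and each of its components $Z$ has $Z\cdot f_1\ge0$ with $\sum_Z(Z\cdot f_1)=(\delta_1f_1)\cdot f_1=0$, so every component is a fiber of $p_1$; hence $\psi^{-1}(t)=p_1^{-1}(D_t)$ for a reduced degree-$\delta_1$ divisor $D_t$ on $C_1$. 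Thus the $D_t$ form a pencil---base-point free because $\delta_1=\gon(C_1)$---and $\psi$ is represented by a morphism $Y\xrightarrow{p_1}C_1\xrightarrow{g}\P^1$ with $\deg(g)=\delta_1$; being a $g^1_{\delta_1}$ with $\delta_1=\gon(C_1)$, the pencil $g$ computes the gonality of $C_1$.

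For (ii): $\phi$ is constant on general fibers of $\psi|_X=g\circ(p_1|_X)$, hence a fortiori on general fibers of the finite surjective morphism $p_1|_X\cl X\ra C_1$; so $\phi$ descends to a morphism $h\cl C_1\ra\P^1$ with $\phi=h\circ(p_1|_X)$. Writing $m:=\deg(p_1|_X)=X\cdot f_1=d(A\cdot f_1)+E\cdot f_1$ and using $E\cdot f_1\ge0$, we get
\[
\irr(X)=\deg(\phi)=\deg(h)\cdot m\ \ge\ \deg(h)\cdot d(A\cdot f_1).
\]
On the other hand Theorem~\ref{thmA}(2) gives $\irr(X)\le d\cdot\mfd(Y,A)+O(1)=d\cdot\delta_1(A\cdot f_1)+O(1)$ with the $O(1)$ independent of $d$, so comparing (and using $A\cdot f_1\ge1$) forces $\deg(h)\le\delta_1$ once $d$ is large. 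Since $\phi$ is generically finite, $h$ is nonconstant, hence $\deg(h)\ge\gon(C_1)=\delta_1$; therefore $\deg(h)=\delta_1$ and $h$ computes the gonality of $C_1$. Then $\phi=h\circ(p_1|_X)$ exhibits $\phi$ as the projection $p_1\cl X\ra C_1$ followed by a gonality map of $C_1$, which is what we wanted.

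Essentially all of the content is in Theorem~\ref{thmA}; the only genuinely new---and the only mildly delicate---ingredient is step (i), i.e. reading off the precise factorization of a minimal-degree pencil on $C_1\times C_2$ from its numerical fiber class. The point to be checked is that a class $\delta_if_i$ cannot be the fiber class of some pencil \emph{not} of the form $g\circ p_i$; the hypotheses $\NS(Y)=\Z f_1\oplus\Z f_2$ and the nefness of the $f_i$ are exactly what make this elementary. Step (ii) is then bookkeeping, using only that the error in Theorem~\ref{thmA}(2) is $O(1)$ as $d\to\infty$.
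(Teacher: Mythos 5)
Your proof is correct and follows exactly the derivation the paper intends: the paper states this corollary without proof as an immediate consequence of Theorem~\ref{thmA} together with the preceding Example's identification of the minimal fiber classes as $\delta_1 f_1$ or $\delta_2 f_2$, and your argument supplies precisely that. The one genuinely nontrivial detail you fill in --- that a pencil with fiber class $\delta_i f_i$ must be pulled back along $p_i$ (via nefness of $f_i$ and $f_i^2=0$ forcing every component of a general fiber to be a $p_i$-fiber) --- is handled correctly, and the concluding degree count via Theorem~\ref{thmA}(2) is sound.
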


\begin{center}
\includegraphics[scale=.29]{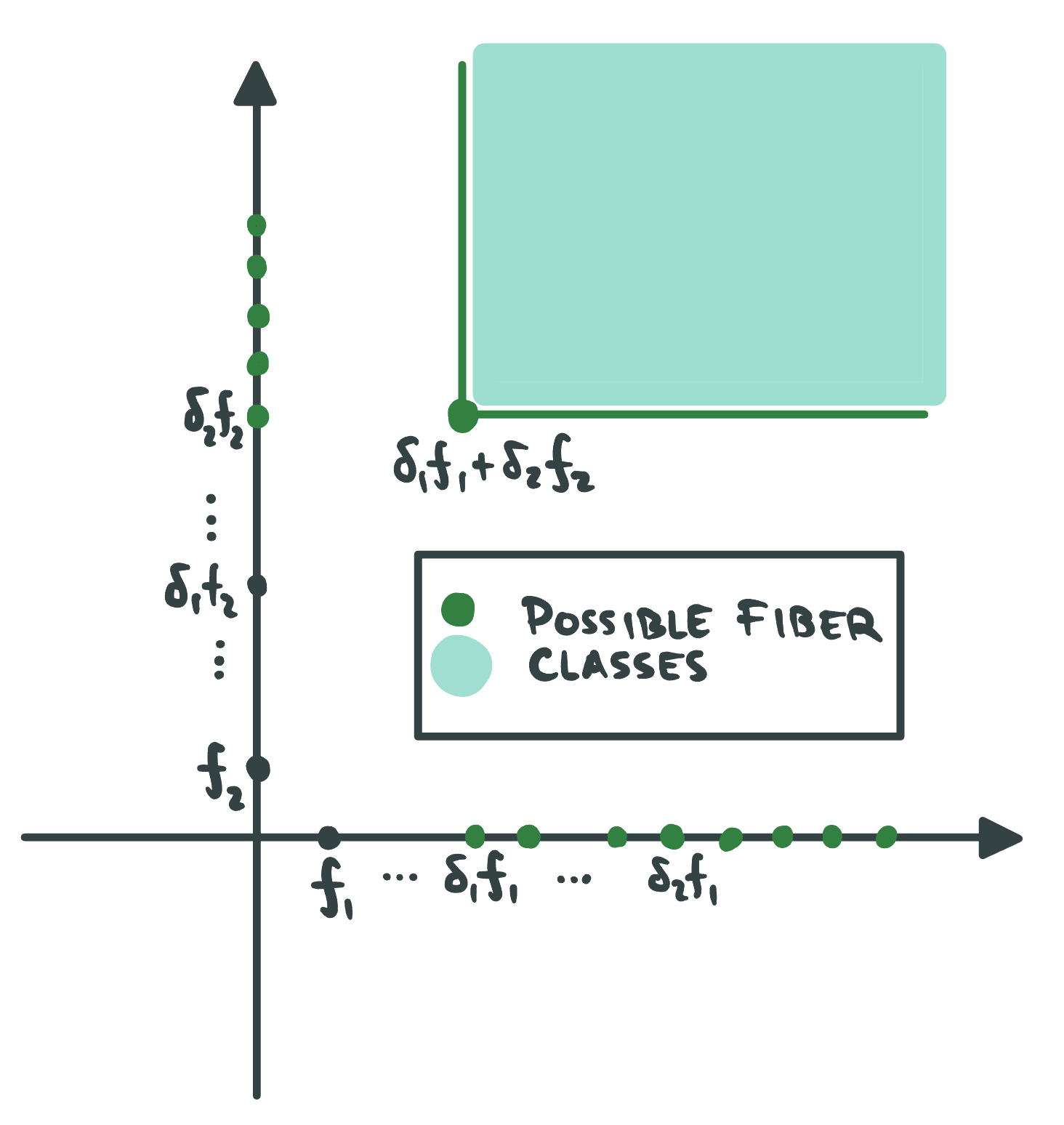}\quad\quad\includegraphics[scale=.29]{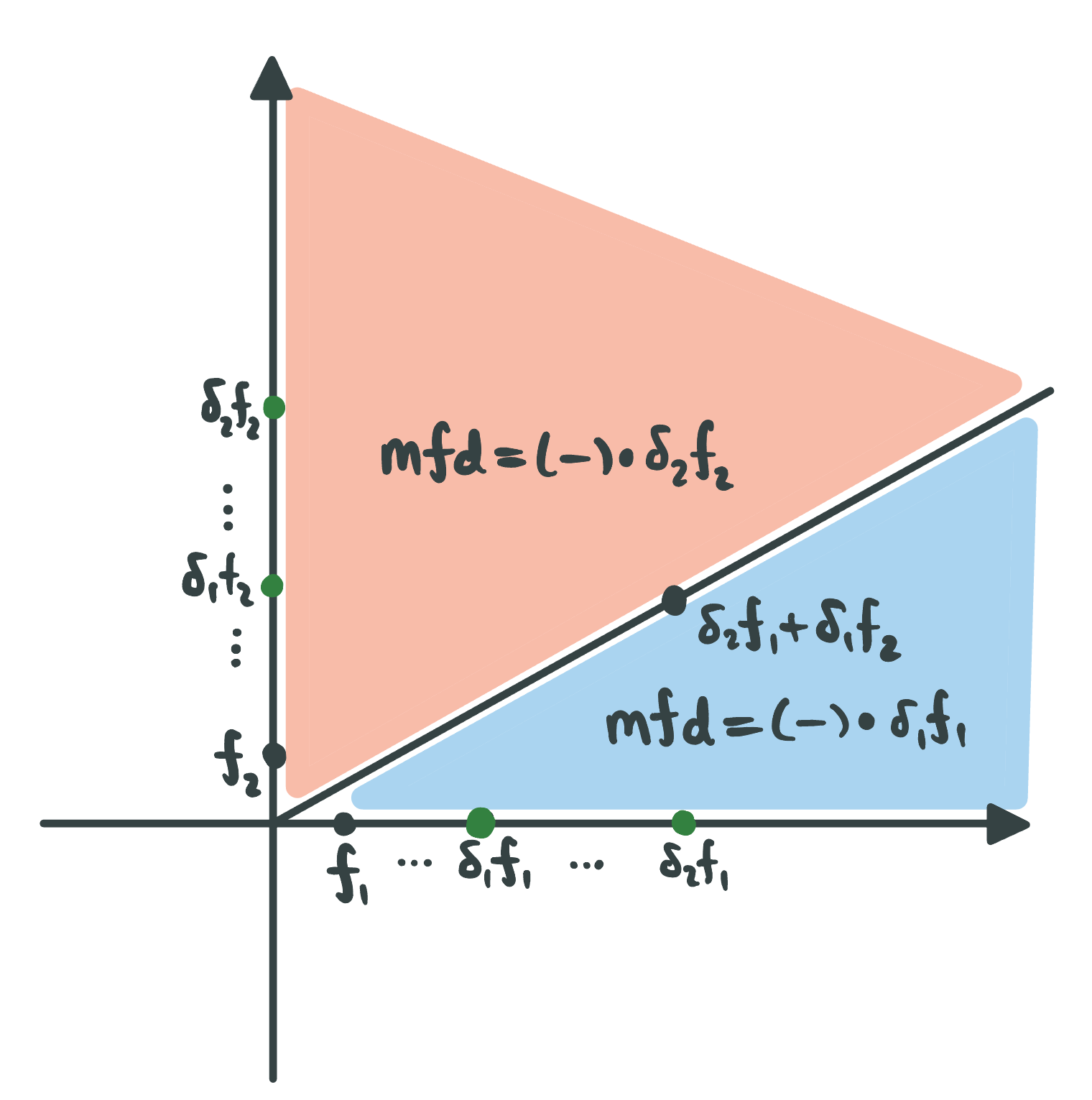}
\end{center}

\begin{example} Let $E$ be an elliptic curve without extra endomorphisms and consider the square $E\times E$. Then
\begin{center}
$\NS(E\times E) \cong \Z f_1 \oplus \Z f_2 \oplus \Z \Delta$
\end{center}
where $f_1$ and $f_2$ are the fibers of the projections as in the previous example and $\Delta$ is the class of the diagonal. These have the following intersection numbers:
\[
f_1^2=f_2^2=\Delta^2=0\text{ and }f_1\cdot \Delta=f_2\cdot\Delta=f_1\cdot f_2 =1.
\]
The nef cone and the pseudoeffective cone are equal in this example and are given by the classes $\alpha\in \NS(E\times E)$ such that $\alpha^2\ge 0$ and $\alpha\cdot (f_1+f_2+\Delta)\ge 0$. To express the function $\mfd$ on the big cone, we take a cross-section of the cone (which is a circle) and extend by homogeneity.

\begin{figure}[htb]
\centering
    \includegraphics[width=8cm]{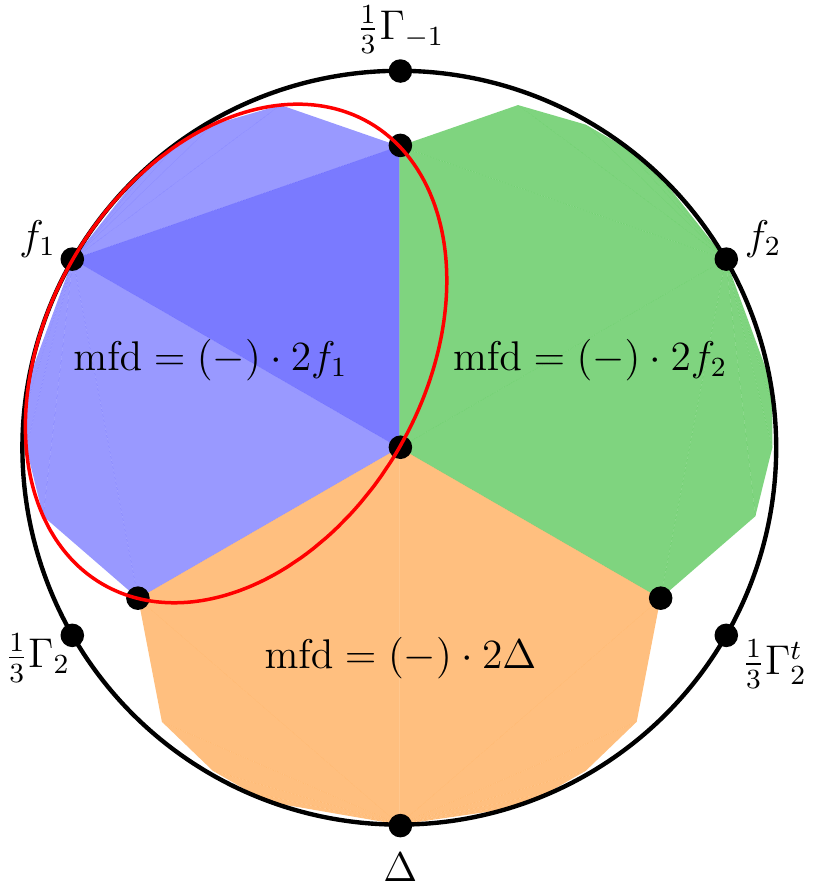}
        \caption{The function $\mfd$ on 
$\Big(E\times E)$ expressed as a piecewise linear function on the cross-section $(f_1+f_2+\Delta)\cdot (-) = 2$. The shaded triangle is a fundamental domain for $\SL_2(\Z)$.}
\end{figure}

Now, $2f_1$, $2f_2$ and $2\Delta$ all move in a pencil. We first find a region on which $\mfd(E \times E, H) = H \cdot 2f_1$. An explicit computation shows that
\begin{center}
$\mfd(E\times E,f_1+f_2+\Delta) = 4$
\end{center}
and for any
\begin{center}
$\psi\cl E\times E\dra \P^1$
\end{center}
the fiber $[C_\psi]$ is numerically equivalent to one of $2f_1$, $2f_2$, or $2\Delta$. Writing $\Gamma_n \subset E \times E$ for the the graph of multiplication by $n$ on $E$, we likewise have
\begin{center}
$\mfd(E\times E, f_1+f_2+\Gamma_{-1}) = 4$
\end{center}
\noindent and is computed by the pencils numerically equivalent to $2f_1$, $2 f_2$, or $2\Gamma_{-1}$ and finally
\begin{center}
$\mfd(E\times E, f_1) = 0$
\end{center}
and is computed by any pencil with fiber class numerically equivalent to a multiple $a f_1$ with $a\ge 2$. By Prop.~\ref{mfdelemprops}(5), we see that for any class $H$ that is a positive linear combination of $f_1+f_2+\Delta$, $f_1+f_2+\Gamma_{-1}$, and $f_1$,  \begin{center}
$\mfd(E\times E, H) = H\cdot 2f_1$
\end{center}
Finally, we can compute the rest of the function $\mfd$ on $\Big(E\times E)$ using the action of $\SL_2(\Z)$ on $E\times E$. The result is pictured in Figure 1.1, with the convex hull of $f_1$, $f_1+f_2+\Delta$, $f_1+f_2 + \Gamma_{-1}$ depicted as a shaded triangular region. For example, the action of the subgroup
\begin{center}
$\left\{\begin{bmatrix}
1&0\\
a&1
\end{bmatrix} \ \middle|\ a\in \Z\right\} \le \SL_2(\Z)$
\end{center}
fixes the class of $f_1$. Thus for any point in the union of the orbit of the blue triangle under this subgroup, we have $\mfd = (-)\cdot 2f_1$. We note there is a connection to the order-3 apeirogonal tiling of the circle.
\end{example}

\begin{corollary}[Corollary of Thm.\ref{thmA}]
Let $Y = E\times E$ as above. Let $A$ be an ample divisor on $Y$, let $E$ be an effective divisor on $Y$, and suppose $d$ is sufficiently large. If $X\in |dA + E|$ is smooth, then any map $\phi\cl X\ra \P^1$ that computes the gonality of $X$ factors through a fibration
\[
p_i\cl E\times E\ra E
\]
followed by a 2:1 map $E \ra \P^1$.
\end{corollary}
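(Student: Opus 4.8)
The proof will combine Theorem~\ref{thmA}(1) with the description of the pencils on $E\times E$ worked out in the example above. Since $X$ is a smooth projective curve, $\gon(X)=\irr(X)$, and a morphism $\phi\colon X\to\P^1$ computing the gonality is precisely a rational map computing $\irr(X)$. The hypotheses of Theorem~\ref{thmA} are satisfied ($E\times E$ is smooth projective of dimension $n+1=2$, $A$ is ample, $E$ is an effective divisor, $d$ is large, and $X\in|dA+E|$ is smooth), so Theorem~\ref{thmA}(1) yields a minimal $A$-degree fibration $\psi\colon E\times E\dra\P^1$ and a rational map $h\colon\P^1\dra\P^1$ with $\phi=h\circ(\psi|_X)$.

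Next I would pin down $\psi$. Its fiber class $[C_\psi]$ lies in $\MFC(E\times E,A)$, and by the explicit piecewise-linear description of $\mfd$ on $\Big(E\times E)$ in the example above, $\mfd(E\times E,A)=2(A\cdot\gamma)$ and $[C_\psi]$ is numerically $2\gamma$ for some primitive nef isotropic class $\gamma\in\NS(E\times E)$ (with $A\cdot\gamma$ minimal among such classes). I claim $\psi$ then factors through the elliptic fibration $p_\gamma\colon E\times E\to E'$ with fiber class $\gamma$. Indeed, a general fiber of $\psi$ is a reduced curve (Bertini, in characteristic $0$) of class $2\gamma$ and arithmetic genus $1$ (adjunction, as $(2\gamma)^2=K_{E\times E}\cdot(2\gamma)=0$); but $E\times E$ has no rational curves, its smooth elliptic curves have primitive class, and $2\gamma$ spans an extremal ray of the nef cone, so no such curve is connected. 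Hence the fibers of $\psi$ are disconnected, and taking Stein factorizations $\psi=\beta\circ\psi'$ with $\psi'$ of connected fibers, the general fiber of $\psi'$ has class $\tfrac{1}{\deg\beta}(2\gamma)$; integrality forces $\deg\beta\mid 2$, and $\deg\beta\ne 1$, so $\deg\beta=2$, $\psi'$ is the elliptic fibration $p_\gamma$ (with $E'\cong E$ since $\gamma$ is primitive), and $\beta\colon E'\to\P^1$ has degree $2$.

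Combining the two steps, $\phi=(h\circ\beta)\circ(p_\gamma|_X)$, so $\phi$ factors through $p_\gamma|_X\colon X\to E'$ followed by $h\circ\beta\colon E'\to\P^1$, a map of degree $2\deg h$. A degree count pins down $\deg h$: we have $\deg(\psi|_X)=(dA+E)\cdot(2\gamma)=2d\,(A\cdot\gamma)+O(1)$, while $\deg\phi=\irr(X)=d\cdot\mfd(E\times E,A)+O(1)=2d\,(A\cdot\gamma)+O(1)$ by Theorem~\ref{thmA}(2); since $\deg\phi=\deg(h)\cdot\deg(\psi|_X)$, for $d\gg0$ we must have $\deg h=1$. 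Thus $h\circ\beta\colon E'\to\P^1$ has degree $2$, and any degree-$2$ map from the smooth genus-$1$ curve $E'$ to $\P^1$ is a gonality map; so $\phi$ is the elliptic fibration $p_\gamma$ restricted to $X$, followed by a gonality map of $E'$. Finally, every elliptic fibration of $E\times E$ is, after composing with an automorphism of $E\times E$, one of the coordinate projections $p_1,p_2$ (the subgroup $\SL_2(\Z)\le\mathrm{Aut}(E\times E)$ acts transitively on primitive isotropic classes, as it does on primitive vectors of $\Z^2$), which gives the stated conclusion.

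The main obstacle is the first claim in the second step — that $\mfd(E\times E,A)$ is realized only by fiber classes of the form $2\gamma$ with $\gamma$ primitive nef isotropic. This is not automatic: $E\times E$ genuinely carries pencils whose fiber class has positive self-intersection (for instance a general pencil inside the net $|f_1+f_2+\Delta|$), so one really needs the inequality $A\cdot[C]\ge 2(A\cdot\gamma)$ for every fiber class $[C]$, which is exactly the content of the explicit computation of $\mfd$ on $\Big(E\times E)$ in the example above; I would simply invoke it. The remaining ingredients — Theorem~\ref{thmA}, the Stein-factorization argument, the degree estimate, and transitivity of $\SL_2(\Z)$ on primitive isotropic classes — are routine.
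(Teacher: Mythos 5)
Your proof is correct and follows the route the paper intends: the corollary is stated without a written proof as a direct consequence of Theorem~\ref{thmA}(1) together with the example's computation of $\MFC(E\times E,A)$, and your added Stein-factorization argument (showing a fibration with fiber class $2\gamma$, $\gamma$ primitive isotropic, is the quotient elliptic fibration followed by a degree-$2$ map) and the degree count forcing $\deg h=1$ correctly fill in the steps the paper leaves implicit. Your caveat that the conclusion holds up to the $\SL_2(\Z)$-action on $E\times E$ (so that $p_\gamma$ becomes a coordinate projection) is the right reading of the statement and matches how the example itself uses that symmetry.
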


\begin{question}
The minimal fibering degree seems to be a natural geometric invariant, and it would be very interesting to understand its behavior for natural classes of varieties. Here we list a few examples of interest.
\begin{enumerate}
\item If $(A,\Theta)$ is a principally polarized abelian variety with $\NS(A) = \Z\cdot \Theta$, can we compute $\mfd(A,\Theta)$?
\item If $A$ is an abelian variety that is isogenous to a product of elliptic curves, is every minimal degree fibration equivalent to a regular morphism (after postcomposing with a Cremona transformation)?
\item If $Y$ is a toric variety, does the function $\mfd$ have a nice combinatorial description?
\item If $Y$ is a \textit{very general} hypersurface (or a complete intersection) of dimension $n+1\ge 3$ and degree $a$, then it seems reasonable to expect that $\mfd(Y,H) = a$ (we give some results in this direction for general complete intersections in \S4). This can be deduced in some cases by work on the failure of the integral Hodge conjecture for these varieties (see \cite{Paulsen}), but it seems possible that it could be easier to prove these results directly.
\end{enumerate}
\end{question}

\section{Elementary properties of equations defining curves and the Cayley--Bacharach condition}

Now we prove some results about curves to set up the proof of Theorem~\ref{thmA}. The following lemma is well known.

\begin{lemma}\label{defeqns}
A degree $e$ curve $C\subset \P^N$ is cut out set-theoretically by equations of degree at most $e$.
\end{lemma}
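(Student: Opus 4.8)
The plan is to reduce to the affine statement and work with the ideal of $C$ degree by degree. First I would recall the basic fact that for a closed point $p \in \P^N \setminus C$, since $C$ is a proper closed subvariety, there exists a hypersurface through $C$ not passing through $p$; the content of the lemma is that one can always take such a hypersurface of degree at most $e = \deg C$. So I would fix a point $p \notin C$ and show $p$ is not contained in the common zero locus of $I(C)_{\le e}$, the homogeneous pieces of the ideal of degree at most $e$.

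The key step is a projection/linear-section argument. Choose a general linear subspace $\Lambda \cong \P^{N-2}$ that contains $p$ but is disjoint from $C$ (possible since $\dim C = 1$, so a general codimension-$2$ linear space meets $C$ in nothing, and we may impose passing through the single point $p$, which costs one condition, still leaving $\dim \Lambda = N-2$ generic enough to avoid $C$). Projection from $\Lambda$ gives a morphism $\pi \colon \P^N \setminus \Lambda \to \P^1$, and $\pi|_C \colon C \to \P^1$ is a finite morphism; its degree is at most $\deg C = e$ by Bézout, since the fibers are the intersections of $C$ with the hyperplanes of the pencil through $\Lambda$. Now pick a point $q \in \P^1$ not in the image $\pi(p)$... rather, since $\pi|_C$ has degree $\le e$, the image point $\pi(p) \in \P^1$ has at most $e$ preimages on $C$... hmm, that is the wrong direction. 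Instead: the fiber $H_t = \pi^{-1}(t) \cup \Lambda$ for $t \in \P^1$ is a hyperplane of $\P^N$ containing $\Lambda$ and hence containing $p$; as $t$ varies over the at most $e$ points of $\pi(C)$ that one wants to avoid — no. Let me restate: consider the at most $e$ hyperplanes in the pencil $\{H_t : t \in \P^1\}$ that pass through the points of $C$... this still is not quite it.

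The cleaner route: project $C$ from $\Lambda$ to get $\pi|_C \colon C \to \P^1$, generically finite of degree $d \le e$, and let $R \subset \P^1$ be the union of branch points together with the image of any singular behavior; this is a finite set. Then $C$ maps to $\P^1$ and we want to separate $p$ from $C$ using a form of degree $\le e$ vanishing on $C$. Equivalently, pushing forward, consider the scheme-theoretic image: the homogeneous coordinate ring of $\P^1$ in the pencil's coordinates, pulled back, gives forms of degree $d$ on $\P^N$ (the pencil generators are linear, their products degree $d$) whose common zero locus along the $\Lambda$-direction... Actually the slickest argument: let $f_0, f_1$ be the two linear forms defining $\Lambda$, so $\pi = [f_0 : f_1]$. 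A point $x \in C$ maps to $[f_0(x):f_1(x)]$. The $d$ preimages on $C$ of a general point are cut out by $f_0 - t f_1$ restricted to $C$. The function field element $f_0/f_1$ on $C$ satisfies a monic polynomial relation of degree $d$ over $k(\P^1)$ coming from the norm — but more geometrically, consider the finite set $\pi(C) = \P^1$ (it's all of $\P^1$ since $C$ is projective of dimension $1$ mapping finitely), so that does not separate. The fix is to instead take a \emph{general} point $p$: for general $p \notin C$, a general hyperplane through $\Lambda$ avoiding $p$... I will instead follow the standard reference argument. The hard part, and where I would focus, is making the elimination-theoretic/Bézout bound on the degree of the separating hypersurface rigorous: namely, show that the ideal sheaf $\mathcal{I}_C$ is generated in degrees $\le e$ \emph{set-theoretically} by exhibiting, for each $p \notin C$, an explicit element of $H^0(\P^N, \mathcal{I}_C(e))$ nonvanishing at $p$, built from a generic projection $C \to \P^1$ as the preimage of the complement of the finitely many "bad" fibers. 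I expect the routine parts (genericity of $\Lambda$, Bézout degree count) to be quick, and the one subtle point to be handling the at most $e$ special fibers through $p$ uniformly — which is exactly what allows degree $e$ (one linear factor per special fiber) and no more.
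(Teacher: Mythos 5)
Your proposal does not reach a proof, and the place where it stalls is exactly the point where the paper's (one-line) argument diverges from yours: you project to $\P^1$, but the image of a projective curve under any projection to $\P^1$ is all of $\P^1$, so no information about $C$ survives and there is nothing left to separate $p$ from. You notice this (``so that does not separate'') but never recover; the proposal ends by deferring to ``the standard reference argument'' and describing, rather than executing, the remaining step. As written there is no separating hypersurface produced, so this is a genuine gap, not a stylistic difference.

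The missing idea is to project to $\P^2$ instead. For a general center $\Lambda\cong\P^{N-3}$, the projection $\pi\colon\P^N\dra\P^2$ restricts to a finite map on $C$, and $\pi(C)\subset\P^2$ is a plane curve of degree at most $e$ (its degree times the degree of $\pi|_C$ onto its image equals $e$). A plane curve \emph{is} a hypersurface in $\P^2$, cut out by a single form of degree $\le e$; pulling that form back along the linear projection gives a degree $\le e$ hypersurface in $\P^N$ (the cone over $\pi(C)$ with vertex $\Lambda$) containing $C$. It remains to see that a general center can be chosen so that $\pi(p)\notin\pi(C)$: the centers for which $\pi(p)\in\pi(C)$ are those meeting the join of $p$ with $C$, a surface in $\P^N$, which a general $\P^{N-3}$ avoids. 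So for each $p\notin C$ one gets a degree $\le e$ hypersurface through $C$ missing $p$, which is the statement. Your instinct to use a projection and a B\'ezout-type degree count was right; the target of the projection just needs to have dimension $2$ so that the image of $C$ is itself defined by one equation whose degree you can control.
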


\begin{proof}
Let $p\in \P^N$ be a point with $p\not\in C$. For a general projection $\pi\cl \P^N \dra \P^2$ the image of $\pi(C)$ and $\pi(p)$ do not intersect. The closure of $\pi^{-1}(\pi(C))$ is a degree $e$ hypersurface containing $C$ that avoids $p$.
\end{proof}

\begin{corollary}\label{samecurve}
Let $C_1$ and $C_2$ be two curves in $\P^N$ with degrees $e_1$ and $e_2$. If $C_1$ is irreducible and $\#(C_1\cap C_2) \ge e_1 e_2+1$, then $C_1\subset C_2$.
\end{corollary}

\begin{proof}
Let $D\subset \P^N$ be a hypersurface of degree at most $e_2$ that contains $C_2$. Then $|C_1\cap D| \ge \#(C_1 \cap C_2) \ge e_1e_2+1$. By Bezout's theorem, as $C_1$ is irreducible, $C_1\subset D$. By Lemma \ref{defeqns}, $C_2$ is cut out set theoretically by hypersurfaces of degree at most $e_2$, so by intersecting all such hypersurfaces we see $C_1\subset C_2$.
\end{proof}

We restate the Cayley--Bacharach condition from the introduction.

\begin{definition}
Let $\Gamma\subset X$ be a finite set of points and let $\Lambda$ be a linear system on $X$. A set of points
\[
\Gamma =\{P_1,\dots,P_\gamma\}\subset X
\]
satisfies the \textit{Cayley--Bacharach condition} with respect to $\Lambda$ if for every point $P\in \Sc$: a divisor $D\in \Lambda$ contains $S$ $\iff$ $D$ contains $S\setminus P$. In the special case that $X=\P^N$ and $\Lambda = |\Oc(r)|$ we say $\Gamma$ satisfies $\CB(r)$.
\end{definition}

The following lemma is elementary.

\begin{lemma}\label{elemprops}
Suppose $\Gamma\subset \P^N$ satisfies $\CB(r)$.
\begin{enumerate}
\item{\cite[Lem 2.4]{BCD}} If $\Gamma\ne \emptyset$ then $|\Gamma|\ge r+2$.
\item If $D\subset\P^N$ is a hypersurface of degree $d$, then $\Gamma\setminus (\Gamma\cap D)$ satisfies $\CB(r-d)$.
\end{enumerate}
\end{lemma}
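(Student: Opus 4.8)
The plan is to treat the two parts independently; each is a short direct argument in which the only real content is bookkeeping with the Cayley--Bacharach condition.

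For (1), I would argue by contraposition. Suppose $\Gamma \ne \emptyset$ and $|\Gamma| \le r+1$, and fix a point $P \in \Gamma$, so that $\Gamma \setminus \{P\}$ has at most $r$ points. For each $Q \in \Gamma \setminus \{P\}$ I would choose a hyperplane $H_Q$ through $Q$ avoiding $P$; this is possible since $Q \ne P$ and the hyperplanes through $Q$ that also contain $P$ form a proper linear subspace of the $\P^{N-1}$ of hyperplanes through $Q$ (here one uses that the ground field is infinite). Taking the union of these hyperplanes, padded by generic hyperplanes missing $P$ until the total degree is exactly $r$, produces a divisor in $|\Oc(r)|$ that contains $\Gamma \setminus \{P\}$ but not $P$, contradicting $\CB(r)$. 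Hence $|\Gamma \setminus \{P\}| \ge r+1$, i.e.\ $|\Gamma| \ge r+2$. This recovers \cite[Lem.~2.4]{BCD}.

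For (2), set $\Gamma' := \Gamma \setminus (\Gamma \cap D)$, so that $\Gamma = (\Gamma \cap D) \sqcup \Gamma'$ and no point of $\Gamma'$ lies on $D$. The forward implication in the definition of $\CB(r-d)$ is automatic, so I fix $P \in \Gamma'$ and a hypersurface $D' \in |\Oc(r-d)|$ with $\Gamma' \setminus \{P\} \subset D'$, and must show $P \in D'$. The key move is to consider the divisor $D + D' \in |\Oc(r)|$: its support contains $\Gamma \cap D$ (because of $D$) and contains $\Gamma' \setminus \{P\}$ (because of $D'$), and since $P \notin D$ we have
\[
(\Gamma \cap D) \cup (\Gamma' \setminus \{P\}) = \Gamma \setminus \{P\},
\]
so $D + D'$ contains $\Gamma \setminus \{P\}$. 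Applying $\CB(r)$ to $\Gamma$ gives $P \in D + D'$, and since $P \notin D$ this forces $P \in D'$, as required.

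There is no genuine obstacle here; the points to keep straight are the disjointness $\Gamma = (\Gamma \cap D) \sqcup \Gamma'$ together with the fact that $P \in \Gamma'$ excludes $P \in D$ — which is exactly what lets one pass from $D + D'$ back down to $D'$ — and, in (1), the use of an infinite base field to find a hyperplane through a prescribed point missing another. If $r < d$ the statement of (2) is vacuous (there are no hypersurfaces of negative degree), and for $r = d$ the displayed argument still applies verbatim, so no edge case needs separate treatment.
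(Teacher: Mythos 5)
Your proof is correct: part (1) is the standard argument behind the cited \cite[Lem.~2.4]{BCD}, and your part (2) (adding $D$ to a degree-$(r-d)$ divisor through $\Gamma'\setminus\{P\}$ and invoking $\CB(r)$ for $\Gamma$) is exactly the "straightforward" argument the paper leaves to the reader. No discrepancies with the paper's (essentially omitted) proof.
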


\begin{proof}
The proof of (2) is straightforward.
\end{proof}

\begin{lemma}\label{ptspercomp}
Let $C = C_1\cup \cdots C_\ell\subset \P^N$ be a curve of degree $f$, with irreducible components $C_i$ of degrees $f_i$. Suppose that $\Gamma\subset C$ is a set of points that satisfy $\CB(r)$. Let $\Gamma_i$ be the set of points in $\Gamma$ that are contained in $C_i$ and not contained in any other curve. Then either $\Gamma_i$ is empty or $\Gamma_i$ satisfies $\CB(r+f_i-f)$. In particular, if $\Gamma_i$ is not empty then
\[
|\Gamma_i|\ge r+f_i-f+2.
\]
\end{lemma}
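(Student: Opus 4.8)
The plan is to verify the Cayley--Bacharach condition for $\Gamma_i$ directly, bootstrapping from the hypothesis $\CB(r)$ for $\Gamma$. If $\ell = 1$ there is nothing to prove, since then $\Gamma_i = \Gamma$ and $r + f_i - f = r$; so assume $\ell \ge 2$ and set $C' := \bigcup_{j \ne i} C_j$, a curve of degree $f - f_i \ge 1$ by additivity of degree over irreducible components. The first observation is the set-theoretic identity $\Gamma \setminus \Gamma_i = \Gamma \cap C'$: a point of $\Gamma$ not lying in $\Gamma_i$ either misses $C_i$ --- hence lies on some other component, because $\Gamma \subset C$ --- or lies on $C_i$ together with some other component; in either case it lies on $C'$. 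Conversely, any point of $\Gamma$ lying on $C'$ lies on a component $C_j$ with $j \ne i$ and so is excluded from $\Gamma_i$. In particular, no point of $\Gamma_i$ lies on $C'$.

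Now fix $P \in \Gamma_i$ and a hypersurface $D$ of degree $r + f_i - f$ passing through $\Gamma_i \setminus \{P\}$ (if $r + f_i - f < 0$ there is no such $D$ and the condition is vacuous, so we may assume $r + f_i - f \ge 0$). Since $P \notin C'$, Lemma~\ref{defeqns} produces a hypersurface $\widetilde{D} \supseteq C'$ of degree at most $f - f_i$ with $P \notin \widetilde{D}$; multiplying $\widetilde{D}$ by generic linear forms avoiding $P$ we may arrange $\deg \widetilde{D} = f - f_i$ exactly. Then $D \cup \widetilde{D}$ is a hypersurface of degree $(r + f_i - f) + (f - f_i) = r$ containing $(\Gamma_i \setminus \{P\}) \cup (\Gamma \cap C') = \Gamma \setminus \{P\}$. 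Applying $\CB(r)$ to $\Gamma$ forces $P \in D \cup \widetilde{D}$, and since $P \notin \widetilde{D}$ we conclude $P \in D$. The opposite implication in the Cayley--Bacharach condition is trivial, so $\Gamma_i$ satisfies $\CB(r + f_i - f)$. (Alternatively, one can globalize Lemma~\ref{defeqns} to a single hypersurface $D_0 \supseteq C'$ of degree $f - f_i$ with $\Gamma \cap D_0 = \Gamma \cap C'$ --- a general member of the degree-$(f-f_i)$ part of the ideal of $C'$ works, since the members vanishing at a prescribed point of the finite set $\Gamma_i$ form a proper linear subspace --- and then invoke Lemma~\ref{elemprops}(2) directly.)

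Finally, if $\Gamma_i \ne \emptyset$ then Lemma~\ref{elemprops}(1) applied to $\CB(r + f_i - f)$ yields $|\Gamma_i| \ge r + f_i - f + 2$, as claimed. The only step needing a little care is the degree bookkeeping --- ensuring the auxiliary hypersurface $\widetilde{D}$ has degree exactly $f - f_i$ while still avoiding $P$ --- but this is immediate once multiplication by generic linear forms is allowed; everything else is formal manipulation of the Cayley--Bacharach condition together with Lemma~\ref{defeqns}.
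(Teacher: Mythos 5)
Your proof is correct and follows essentially the same route as the paper: both hinge on Lemma~\ref{defeqns} producing a hypersurface of degree $f-f_i$ containing $\bigcup_{j\ne i}C_j$ while avoiding the relevant points of $\Gamma_i$, thereby dropping the Cayley--Bacharach degree by $f-f_i$. The only difference is cosmetic --- the paper chooses one such hypersurface avoiding all of $\Gamma_i$ at once and cites Lemma~\ref{elemprops}(2), whereas your main argument verifies the condition point by point; your parenthetical alternative is precisely the paper's proof.
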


\begin{proof}
Assume $\Gamma_i\ne \emptyset$. Define $C^*_i := \bigcup_{j\ne i} C_j$ and $\Gamma^*_i:=\Gamma\setminus\Gamma_i$. Then $C_i^*$ has degree $f-f_i$. By Lemma~\ref{defeqns}, $C_i^*$ is cut out (set theoretically) by equations of degree $f-f_i$. Thus there is a hypersurface $D\subset \P^N$ of degree $f-f_i$ that contains $C^*_i$ and avoids $\Gamma_i$. By Lemma~\ref{elemprops}, $\Gamma_i$ satisfies $\CB(r+f_i-f)$ and so by \cite[Lem. 2.4]{BCD}:
\[
|\Gamma_i|\ge r+f_i-f+2.\qedhere
\]
\end{proof}

\section{The asymptotic degree of irrationality of high degree hypersurfaces}

This section is the core of the paper and contains the proof of Theorem~\ref{thmA}. We show that maps computing the degree of irrationality of large degree hypersurfaces on a projective variety factor through minimal degree fibrations of that variety in curves. Throughout this section we use Banerjee's result (Thm.~\ref{Banerjee}). 

Our proof of Theorem~\ref{thmA} proceeds as follows. We reduce to the case that $X\in |dH+E|$ for $H$ the restriction of a hyperplane class of an embedding $Y\subset \P^N$. We consider fibers $\Gamma \subset X$ of a minimal-degree map $\phi : X \dra \P^n$. By taking $d\gg 0$, we successively show:
\begin{itemize}
    \item $\Gamma$ lies on a curve $C$ of degree $f \leq \mfd(Y, H)$ (Lemma \ref{degreebound});
    \item $C$ is unique and lies on $Y$ (Lemma \ref{contY});
    \item these curves $C$ sweep out $Y$ birationally (completing the proof of Theorem A).
\end{itemize}
The necessary bounds on $d$ come from certain auxiliary constants $\zeta$ (Eq. \eqref{eqn:zeta}), $\ell$ (Eq. \eqref{eqn:ell}) and $M$ (Eq. \eqref{eqn:M}) defined below, depending only on $Y, H, E$.
 
To start we briefly recall an important notion of birational positivity, referred to as $\BVA_p$, which stands for \emph{birationally $p$-very ample}.

\begin{definition}
Recall that a line bundle $L$ on a projective variety $X$ \textit{satisfies property} $\BVA_p$ if there is a nonempty open subset $U\subset X$ such that for all finite subschemes $\xi\subset U$ of length $p+1$:
\[
H^0(X,L)\ra H^0(X,L|_\xi)
\]
is surjective.
\end{definition}

\begin{remark}
The connection to our work here is the following. If $\omega_X$ satisfies $\BVA_p$ then $X$ is not swept out by curves of gonality $\le p+1$ (\cite[Thm. 2.3]{BDELU}), so $\irr(X)\ge p+2$.
\end{remark}

\noindent\textbf{Assumptions.} Let $Y\subset \P^N$ be a smooth subvariety of dimension $n+1$. Let $H$ be the hyperplane class restricted to $Y$ and let $E$ be any effective divisor on $Y$. Let $X\in |\Oc_Y(d H + E)|$ be a smooth divisor. Let $\phi\cl X\dra \P^n$ be a rational map with $\deg(\phi) = \irr(X)$ and let $\Gamma\subset X$ be a general fiber of $\phi$. Let $e:= \mfd(Y,H)$.\\

We now analyze the projective geometry of $\Gamma\subset \P^N$, and study the minimal degree curves in $\P^N$ that interpolate $\Gamma$. Define:
\begin{equation}\label{eqn:zeta}
\zeta:=\min\{ m \in \mathbb{Z} \mid  \omega_Y(mH+E) \text{ is globally generated}\}.
\end{equation}
Geometrically, when $d \geq \zeta$, the canonical bundle of $X$ becomes globally generated, as follows:
\begin{lemma}\label{CBGamma}
The line bundle $\omega_X$ satisfies $\BVA_{d-\zeta}$ and thus $\Gamma$ satisfies $\CB(d-\zeta)$.
\end{lemma}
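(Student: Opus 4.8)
### Proof Proposal

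The plan is to prove the two assertions in sequence: first that $\omega_X$ satisfies $\BVA_{d-\zeta}$, and then to deduce that the fiber $\Gamma$ satisfies $\CB(d-\zeta)$ by invoking the relationship between $\BVA_p$ and the Cayley--Bacharach condition.

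For the first assertion, I would begin with adjunction: since $X \in |\Oc_Y(dH+E)|$ is a smooth divisor, we have $\omega_X \cong \omega_Y(dH+E)|_X$. Writing $d = (d-\zeta) + \zeta$, this becomes $\omega_X \cong \big(\omega_Y(\zeta H + E) \otimes \Oc_Y((d-\zeta)H)\big)\big|_X$. By the definition of $\zeta$ in Equation~\eqref{eqn:zeta}, the bundle $\omega_Y(\zeta H + E)$ is globally generated on $Y$, and $H$ is very ample (it is the hyperplane class of the embedding $Y \subset \P^N$). Hence $\omega_X$ is the restriction to $X$ of a globally generated bundle tensored with a very ample bundle raised to the power $d-\zeta$. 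The key point is then a standard "separation of points" argument: a bundle of the form $M \otimes (\text{very ample})^{\otimes p}$ with $M$ globally generated separates any finite subscheme of length $p+1$ on a general open set — indeed one can choose $p$ general hyperplane sections through a chosen point of such a subscheme, each vanishing on exactly one of the remaining points, to produce sections realizing the required surjectivity $H^0(Y, \cdot) \to H^0(\xi, \cdot|_\xi)$. Restricting from $Y$ to $X$ (and shrinking the open set $U$ to avoid the base locus of $E$ and to lie in $X$) transfers the property $\BVA_{d-\zeta}$ to $\omega_X$.

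For the second assertion, I would recall that $\Gamma$ is a general fiber of $\phi\cl X \dra \P^n$, so it is a general reduced $0$-cycle of length $\deg(\phi)=\irr(X)$ moving in the family cut out by $\phi$. By the general theory connecting degree-of-irrationality maps to the Cayley--Bacharach condition (as in \cite{BDELU}; this is essentially the "$\BVA_p \Rightarrow$ no low gonality covers" remark recalled just above, applied to the fibers rather than its contrapositive), the $\BVA_{d-\zeta}$ property of $\omega_X$ forces each general fiber $\Gamma$ of a minimal-degree rational map to $\P^n$ to satisfy the Cayley--Bacharach condition with respect to $|\omega_X|$ with the relevant index — concretely, if some proper subset $\Gamma \setminus P$ imposed the same conditions on $|\omega_X|$ as $\Gamma$ does, one could build a lower-degree map to $\P^n$, contradicting minimality. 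Unwinding the adjunction once more, the linear system $|\omega_X|$ restricted near $\Gamma$ agrees with the restriction of $|\Oc_{\P^N}(d-\zeta)|$ (up to the globally generated, hence harmless, twist), so the Cayley--Bacharach condition with respect to $|\omega_X|$ becomes exactly $\CB(d-\zeta)$ for $\Gamma \subset \P^N$.

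The main obstacle I anticipate is not the adjunction bookkeeping, which is routine, but rather pinning down precisely \emph{why} generality of $\Gamma$ as a fiber of a \emph{minimal-degree} map forces the Cayley--Bacharach condition — this is where the hypothesis $\deg(\phi) = \irr(X)$ is essential, and it is the crux of the argument linking the positivity statement $\BVA_{d-\zeta}$ to the combinatorial statement $\CB(d-\zeta)$. One must be careful that $\Gamma$ is genuinely general (not just a single fiber), so that the open set $U$ from the definition of $\BVA_{d-\zeta}$ can be assumed to contain $\Gamma$, and that the induced linear series on $X$ near $\Gamma$ really is the full restriction of $\Oc(d-\zeta)$ up to a globally generated factor; this last point again uses $d \geq \zeta$ so that the exponent $d-\zeta$ is nonnegative.
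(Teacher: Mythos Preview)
Your argument for $\BVA_{d-\zeta}$ is essentially the paper's: both start from adjunction and write $\omega_X \cong \omega_Y(\zeta H+E)|_X \otimes \Oc_X((d-\zeta)H)$. The paper uses only that $\omega_Y(\zeta H+E)|_X$ is \emph{effective} (global generation on $Y$ guarantees a section not vanishing identically on $X$), then cites the standard facts that $\Oc_X((d-\zeta)H)$ satisfies $\BVA_{d-\zeta}$ and that tensoring by an effective divisor preserves $\BVA_p$ (\cite[Ex.~1.2(i),(iv)]{BDELU}). Your use of full global generation is stronger than needed but harmless.

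The genuine problem is your second step. You attribute the Cayley--Bacharach property of $\Gamma$ to the \emph{minimality} of $\phi$, sketching that a failure of CB would let one ``build a lower-degree map to $\P^n$, contradicting minimality.'' This is not how the CB condition arises, and the hypothesis $\deg(\phi)=\irr(X)$ is irrelevant at this point. A general fiber of \emph{any} dominant generically finite rational map $X\dra\P^n$ satisfies Cayley--Bacharach with respect to $|\omega_X|$; this is Mumford's trace-of-differentials observation (the result cited as \cite[Thm.~2.3]{BCD}), and it is a duality statement, not an extremality one. There is no evident way to turn a single failure of CB at a point $P\in\Gamma$ into a globally defined map of lower degree, so your proposed mechanism does not work as stated.

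Once CB with respect to $|\omega_X|$ is in hand, the passage to $\CB(d-\zeta)$ in $\P^N$ is the routine part: write $\omega_X = E' + (d-\zeta)H|_X$ with $E'$ effective on $X$, observe that a general fiber avoids $E'$, and note that CB with respect to $|\omega_X|$ then forces CB with respect to the sublinear system on $X$ cut by degree-$(d-\zeta)$ hypersurfaces in $\P^N$. Your intuition that the globally generated twist is ``harmless'' is correct here, but the source of the CB condition itself needs to be replaced by the trace argument.
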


\begin{proof}
By adjunction
\[
\omega_X \cong \omega_Y(dH+E)|_X\cong \omega_Y(\zeta H+E)|_X\otimes \Oc_X((d-\zeta)H).
\]
Now $\omega_Y(\zeta H + E)|_X$ is effective. (By definition of $\zeta$, $\omega_Y(\zeta H+E)$ is globally generated. Thus we can take a section that does not contain $X$.) $\Gamma$ satisfies $\CB(d-\zeta)$ by \cite[Ex. 1.2(i),(iv)]{BDELU} and \cite[Thm. 2.3]{BCD}.

\end{proof}

Set
\begin{equation}\label{eqn:ell}
\ell := \min\{ m\in \N \mid mH \ge E\}.
\end{equation}

\begin{lemma}\label{degreebound}\hspace{1pt}
\begin{enumerate}
\item If $d\ge \zeta+e-1$ then there is an upper bound
\[
\irr(X) \le d\cdot e + O(1).
\]
\item If $E$ is a $\Q$-multiple of $H$ then $\irr(X) \le \mfd(Y,X)$.
\item If $d\gg 0$ then $\Gamma$ lies on a curve of degree at most $e$ in $\P^N$.
\end{enumerate}
\end{lemma}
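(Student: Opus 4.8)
The plan is to deduce all three parts from Banerjee's theorem (Thm.~\ref{Banerjee}) applied to the fiber $\Gamma \subset \P^N$, using the Cayley--Bacharach bound from Lemma~\ref{CBGamma} together with the minimal fibering degree $e = \mfd(Y,H)$. The starting point is that, by Lemma~\ref{CBGamma}, $\Gamma$ satisfies $\CB(d - \zeta)$, so for $d$ large the ``CB parameter'' $r = d - \zeta$ is large. To feed this into Banerjee's theorem I need an upper bound on $|\Gamma| = \irr(X)$ that is (roughly) linear in $r$ with slope $e$; this is exactly the content of part (1), so parts (1) and (3) must be established roughly in parallel, with (1) supplying the hypothesis needed to run Banerjee for (3).

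For part (1): fix a minimal-degree fibration $\psi\colon Y \dra \P^n$ with $H \cdot [C_\psi] = e$. Restricting $\psi$ to $X$ gives a dominant rational map $\psi|_X \colon X \dra \P^n$, and its general fiber is $X \cap C_\psi$, which consists of $\deg(H \cdot [C_\psi]) = e$ points away from the base locus — more precisely $\deg\bigl((dH+E)\cdot[C_\psi]\bigr) = de + E\cdot[C_\psi]$ points, counted on the curve $C_\psi$ which has degree $e$. So $\deg(\psi|_X) \le de + E\cdot [C_\psi] = de + O(1)$, giving $\irr(X) \le de + O(1)$. The hypothesis $d \ge \zeta + e - 1$ is there to guarantee the general $C_\psi$ is not contained in $X$ (so the restriction really is generically finite of the expected degree); one checks this using that $\omega_Y(\zeta H + E)|_X$ is effective as in Lemma~\ref{CBGamma}, or more directly that a general member of $|dH+E|$ meets the moving family $\{C_\psi\}$ properly once $d$ is at least the relevant bound. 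Part (2) is the same computation: if $E \equiv_{\mathrm{num}} c H$ for a rational $c > 0$, then $(dH+E)\cdot[C_\psi] = (d+c)e$, and since $\deg_H(\psi) = e$ with $X \in |dH+E|$ we get $\deg_X(\psi) = (d+c)\,e = \mfd(Y,X)$ exactly (reading $\mfd(Y,X)$ as $\mfd$ with respect to the class of $X$), so $\irr(X) \le \mfd(Y,X)$.

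For part (3): combine Lemma~\ref{CBGamma} with part (1). We have $|\Gamma| = \irr(X) \le de + O(1)$, and $\Gamma$ satisfies $\CB(r)$ for $r = d-\zeta$. Writing the $O(1)$ bound from (1) as $|\Gamma| \le de + c_0$ for an explicit constant $c_0 = c_0(Y,H,E)$, we rewrite $de = e(r + \zeta) = (e+1)r - \bigl(r - e\zeta\bigr)$, so $|\Gamma| \le (e+1)r - \bigl(r - e\zeta - c_0\bigr)$. Now apply Banerjee's theorem with this value of $e$: once $r \ge G(e)$ and $r - e\zeta - c_0 \ge H(e)$ — i.e. once $d \gg 0$ depending only on $Y, H, E$ through $\zeta, e, c_0$ and the functions $G, H$ — the hypotheses of Thm.~\ref{Banerjee} are met and $\Gamma$ lies on a curve of degree at most $e$ in $\P^N$.

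The main obstacle is part (1) — specifically, being careful that restricting the minimal fibration $\psi$ to the divisor $X$ genuinely produces a generically finite map of the expected degree, rather than having $X$ contain the generic fiber curve or having the fibration degenerate on $X$. The numerical intersection count $de + E\cdot[C_\psi]$ is immediate, but turning it into an honest bound on $\irr(X)$ requires the general member of the family $\{C_\psi\}_{\psi}$ to be transverse-ish to $X$ and to map dominantly to $\P^n$ after restriction; this is where the threshold $d \ge \zeta + e - 1$ (and the effectivity of $\omega_Y(\zeta H+E)|_X$) does the work. Everything after that — plugging into Banerjee — is bookkeeping with the constants $\zeta$ and $c_0$.
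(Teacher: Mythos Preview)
Your overall architecture matches the paper's proof exactly: restrict a minimal $H$-degree fibration $\psi$ to $X$ to get the upper bound in (1), read off (2) as the special case $E \equiv cH$, then feed the bound $|\Gamma| = \irr(X) \le de + O(1)$ together with $\CB(d-\zeta)$ into Banerjee to obtain (3). The bookkeeping in (3) is fine.

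The gap is precisely where you flag it: why $\psi|_X$ is dominant for an \emph{arbitrary} smooth $X \in |dH+E|$. Neither of your suggested justifications quite works. The ``general member of $|dH+E|$ meets the family properly'' argument only handles generic $X$, not every smooth one, and the statement is asserted for any smooth $X$. The reference to effectivity of $\omega_Y(\zeta H+E)|_X$ is in the right direction but stops one step short of the actual mechanism. The paper's argument is: if $\psi|_X$ is not dominant, then $X$ is contained in a union of fibers of $\psi$ and is therefore swept out by curves of degree $\le e$; any such curve has gonality $\le e$, so the covering gonality of $X$ is $\le e$. But $\omega_X$ satisfies $\BVA_{d-\zeta}$ (Lemma~\ref{CBGamma}), which forces the covering gonality to be at least $d-\zeta+2$. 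This is a contradiction as soon as $d-\zeta+2 > e$, i.e.\ $d \ge \zeta + e - 1$, which explains the precise threshold. Once you insert this covering-gonality argument, your proof is complete and coincides with the paper's.

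One minor point on (2): you write $c > 0$, but $E$ may be $0$; and you should note that $\mfd(Y,dH+E) = (d+c)\,\mfd(Y,H) = (d+c)e$ by Proposition~\ref{mfdelemprops}(1), so that the restricted degree really equals $\mfd(Y,X)$ on the nose.
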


\begin{proof}
Consider a map $\psi\cl Y\dra\P^n$ such that $e=\deg_H(\psi)$. As $Y$ is smooth this can be defined away from codimension 2, so we consider the map $\psi|_X$.

If $\psi|_X$ is dominant, then
\[
\irr(X)\le \deg(\psi|_X) \le \deg_{dH+E}(\psi).
\]
If $E$ is numerically a $\Q$-multiple of $H$, then $\deg_{dH+E}(\psi) = \mfd(Y,X)$. Otherwise, the right hand side is bounded by
\[
\deg_{(d+\ell)H}(\psi) = (d+\ell) \cdot e.
\]
Thus
\[
\irr(X) \le d\cdot e + O(1)
\]
where we can take the constant term to be $\ell \cdot e.$

Assume for contradiction that $\psi|_X$ is not dominant. Consider the rational map
\[
\P^n \dra \mathrm{Hilb}_{e}
\]
(the Hilbert scheme of degree $e$ curves in $\P^N$) induced by $\psi$. Let $B \to \mathrm{Hilb}_e$ be a resolution of this map (by abuse of notation, the map $\psi$ can be thought of as having codomain $B$). Let $\pi : C_B \to B$ be the pullback of the universal family of $\mathrm{Hilb}_e$ to $B$. So we have a diagram:
\[
\begin{tikzcd}
X\arrow[hook,d]\arrow[bend right=80,dashed,ddr,swap,"\psi|_X"]& \Xbar \arrow[l]\arrow[hook,d] \\
Y\arrow[dashed,dr,swap,"\psi"] & C_B \arrow[d,"\pi"]\arrow[l,"p"]\\
 & B.
\end{tikzcd}
\]
The map $C_B \to Y$ is birational. Define $\overline{X} \to X$ to be the strict transform of $X$ in $C_B$ and $Z$ the image of $\Xbar$ in $B$. We have $\Xbar \subseteq \pi^{-1}(Z)$. This shows that if $X$ is contracted by $\psi$, then it is swept out by curves of degree $\le e$. However, a curve of degree $\le e$ has gonality $\le e$, whereas by Lemma~\ref{CBGamma} and \cite[Lem. 1.3]{BDELU}, $X$ has covering gonality at least $d-\zeta+2$. This gives a contradiction once $d\ge \zeta+e-1$.

Now let $G(e)$ and $H(e)$ be the functions from Theorem~\ref{Banerjee}. As $d\gg0$, $d-\zeta \ge G(e)$. Likewise, as $d\gg0$ we have
\[
|\Gamma|\le ed+ e\ell \le (e+1)(d-\zeta) -H(e).
\]
Therefore by Theorem~\ref{Banerjee}, we have $\Gamma$ lies on a curve of degree $\le e$.
\end{proof}

\begin{remark}
In particular, if $G(e)$ and $H(e)$ are the functions from Banerjee's Theorem, as soon as
\[
d\ge \max\{G(e)+\zeta, e\ell +(e+1)\zeta+H(e)\}
\]
then $\Gamma$ lies on a curve of degree at most $e$.
\end{remark}

Let $C\subset \P^N$ be a curve of minimal degree through the general fiber $\Gamma$ of $\phi$, and set $f = \deg(C)$. Let $C_1, \ldots, C_\ell$ be the irreducible components of $C$, and set $f_i=\deg(C_i)$. By minimality, each $C_i$ contains a point in $\Gamma$ that is not contained in any other component.

We now show that for $d \gg 0$ (depending on $Y$), $C$ is unique and $C \subset Y$. Define:
\begin{equation}\label{eqn:M}
M:=\min\{ m \mid Y \text{ is the set-theoretic intersection of degree}\le m\text{ hypersurfaces}\}.
\end{equation}
\begin{lemma}\label{contY} Assume that $f\le e$ (which holds if $d\gg 0$) and assume that $d\ge e+\zeta+ M\cdot e-1$.
\begin{enumerate}
\item Every component $C_i\subset C$ contains at least $M\cdot f_i+1$ points that are not contained in any other component, and it follows that $C\subset Y$.
\item Assume further that $d\ge e^2+e+\zeta-1$. Then every component $C_i\subset C$ contains at least $e^2+1$ points of $\Gamma$ not contained in any other component. Moreover, there is a unique curve $C$ of minimal degree through $\Gamma$.
\end{enumerate}
\end{lemma}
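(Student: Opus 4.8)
The plan is to extract everything from three facts already established: that $\Gamma$ satisfies $\CB(d-\zeta)$ (Lemma~\ref{CBGamma}), the component-by-component point count of Lemma~\ref{ptspercomp}, and the elementary Bézout input behind Lemma~\ref{defeqns} and Corollary~\ref{samecurve} — namely, that an irreducible curve of degree $\delta$ meeting a hypersurface of degree $m$ (resp.\ a curve of degree $\delta'$) in more than $m\delta$ (resp.\ $\delta\delta'$) points must lie inside it. The only role of ``$d\gg 0$'' is to make each $\Gamma_i$ large enough to trigger these estimates against the relevant families of hypersurfaces: degree $\le M$ for $Y$ in part (1), and degree $\le e$ for a minimal curve through $\Gamma$ in part (2).

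\emph{Part (1).} First I would apply Lemma~\ref{ptspercomp} with $r=d-\zeta$ to $C=C_1\cup\cdots\cup C_\ell$. By minimality of $C$ every $\Gamma_i$ is nonempty, so the lemma gives $|\Gamma_i|\ge (d-\zeta)+f_i-f+2$ for each $i$. Since $f\le e$ and $f_i\le f\le e$, the hypothesis $d\ge e+\zeta+Me-1$ forces $|\Gamma_i|\ge Mf_i+1$. By the definition of $M$ in \eqref{eqn:M}, $Y$ is the set-theoretic intersection of hypersurfaces $D$ of degree $\le M$; for each such $D$ one has $\Gamma_i\subset X\subset Y\subset D$ and $\Gamma_i\subset C_i$, so $\#(C_i\cap D)\ge |\Gamma_i|\ge Mf_i+1>(\deg D)f_i$. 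As $C_i$ is irreducible, Bézout gives $C_i\subset D$; intersecting over all such $D$ yields $C_i\subset Y$, hence $C=\bigcup_i C_i\subset Y$.

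\emph{Part (2).} Rerunning the estimate with $d\ge e^2+e+\zeta-1$ and using $f_i\ge 1$, $f\le e$, Lemma~\ref{ptspercomp} gives $|\Gamma_i|\ge (d-\zeta)+1-e+2\ge e^2+1$ for every component. For uniqueness, let $C'$ be any curve of minimal degree through $\Gamma$, so $\deg C'=f$. For each component $C_i$ of $C$ we have $\Gamma_i\subset C_i\cap C'$, so $\#(C_i\cap C')\ge |\Gamma_i|\ge e^2+1\ge f_i f+1$ (since $f_i\le e$ and $f\le e$); by Corollary~\ref{samecurve}, $C_i\subset C'$. Thus $C\subseteq C'$, and since $\deg C=\deg C'=f$ and degree is additive over components, $C'$ has no extra components, so $C=C'$.

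\emph{Main obstacle.} There is no deep step here: the work is purely the bookkeeping that keeps $|\Gamma_i|$ above the appropriate Bézout threshold for the chosen bound on $d$. The two points needing care are (a) invoking the minimality of $C$ to know $\Gamma_i\ne\emptyset$ for \emph{every} $i$ — this is what licenses the per-component lower bound of Lemma~\ref{ptspercomp} — and (b) keeping the two set-theoretic cutting-out statements separate (degree $\le M$ for $Y$ in part (1), degree $\le e$ for $C'$ in part (2)) so that the hypothesis on $d$ clears the right threshold in each case.
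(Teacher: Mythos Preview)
Your argument is correct and matches the paper's proof essentially line for line: the same invocation of Lemma~\ref{ptspercomp} with $r=d-\zeta$ to bound $|\Gamma_i|$, the same B\'ezout against degree-$\le M$ hypersurfaces cutting out $Y$ in part~(1), and the same use of Corollary~\ref{samecurve} against a second minimal curve $C'$ in part~(2). The only cosmetic difference is that you conclude $C=C'$ by additivity of degree, whereas the paper says ``by symmetry''; both are fine.
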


\begin{proof}
First we prove (1). Let $\Gamma_i$ be the set of points of $\Gamma$ contained in $C_i$ and not contained in any other component of $C$. By Lemma~\ref{ptspercomp},
\[
|\Gamma_i|\ge d-\zeta+f_i-f+2.
\]
By our assumption on $d$ and the fact that $e\ge f\ge f_i$, we see
\[
|\Gamma_i| \ge  M\cdot f_i+1.
\]
Therefore, by Bezout's theorem $C_i$ is contained in $Y$.

To prove (2), by Lemma~\ref{ptspercomp}, Lemma~\ref{CBGamma}, and the assumption on $d$, each component $C_i$ contains at at least 
\[d-\zeta -e+2\ge e^2+1 (\ge f_i f + 1)\] points of $\Gamma$ not contained in any other component of $C$. 

Suppose $C'$ is any curve of minimal degree containing $\Gamma$. By assumption $\deg(C') = f$, so by Corollary~\ref{samecurve}, $C_i \subset C$, and thus $C \subset C'$. By symmetry, $C = C'$.
\end{proof}

We now complete the proof of Theorem A by showing that the minimal-degree curves $C$ containing the fibers $\Gamma$ sweep out $Y$ birationally.

\begin{proof}[Proof of Theorem~\ref{thmA}]

To start, assume $A$ is \textit{not} very ample. Let
\[
V:=\min\left\{ m>0\ \middle|\begin{array}{l}
mA\text{ is very ample, and the divisors}\\
(m+1)A,\dots,(2m-1)A\text{ are all effective}
\end{array}\right\}.
\]
Let $H=V\cdot A$. So, for any $d$ sufficiently large, if $q = \lfloor d/V\rfloor$ then
\[
|dA+E| \in \{ |q H+E|, |(q-1)H+((V+1)A+E)|,\dots,|(q-1)H+((2V-1)A+E)|\}.
\]
Now the minimal $A$-degree fibrations are the same as the minimal $H$-degree fibrations. So proving (1) for $d$ sufficiently large is equivalant to proving (1) for $q$ sufficiently large. So for the rest of the argument we assume that $A=H$ is very ample.

Consider the Hilbert scheme $\Hilb_f$ of degree $f$ ($\le e$) curves in $\P^N$ and with universal curve $C_f\ra \Hilb_f$. By Lemma~\ref{contY}(2) there is a map
\[
\P^n \dra \Hilb_f
\]
that sends a general point $b\in \P^n$ to the unique degree $f$ curve that contains the fiber $\Gamma=\phi^{-1}(b)$. We may assume that a general point $b\in\P^n$ maps to a reduced curve $[C]\in \Hilb_f$. Resolving the rational map gives a regular map:
\[
\P^n\simeq_{\mathrm{bir}} B\ra \Hilb_f.
\]
Consider the following diagram:
\[
\begin{tikzcd}
X\arrow[hook,d]\arrow[bend right=60,dashed,ddr,swap,"\phi"]& \Xbar \arrow[l]\arrow[hook,d] &\\
Y & C_B \arrow[d,"\pi"]\arrow[r]\arrow[l,"p"]& C_f\arrow[d]\\
 & B\arrow[r] & \Hilb_f.
\end{tikzcd}
\]
Here $C_B\subset \P^N\times B$ is the pullback of the universal curve $C_f$ to $B$. By Lemma~\ref{contY}(1) the projection to $\P^N$ lands inside $Y$; this gives the map $p$. There is a natural rational map $X\dra C_B$ given by sending a general point $x\in X$ to $(x,\phi(x))\in C_B\subset \P^N \times B$. Let $\Xbar$ be the closure of the image of $X$ in $C_B$.

Let $C'_B \subseteq C_B$ be any irreducible component. By flatness $C'_B$ dominates $B$ and has dimension $n+1$. We claim that $p : C'_B \to Y$ is likewise dominant, in particular generically finite. Let $b \in B$ be a general point and let $\Gamma_b \subseteq X$ and $C_b \subseteq Y$ be the corresponding fiber and curve. Let $C'_b = C'_B \cap C_b$. By Lemma~\ref{contY}(2), $C'_b$ contains a point of $\Gamma_b$ (in fact at least $e^2+1$ points). Thus $p(C'_B) \cap X \dra B$ is dominant, so since $X$ is irreducible, $p(C'_B) \supseteq X$. By Lemma~\ref{degreebound}, $X$ cannot be swept out by curves of degree $\leq e$, so $p(C'_B) \ne X$, from which we conclude $p(C'_B) = Y$.

Consider the cycle $[C_B] \in \mathrm{CH}_{n+1}(Y\times B)$. Let $p_1$ (resp. $p_2$) denote the projections of $Y\times B$ onto $Y$ (resp. $B$). To show that $C_B$ is birational to $Y$ it suffices to show
\[
(p_1)_* [C_B] = [Y].
\]
By the projection formula, this is the same as showing:
\[
(p_1)_*((p_1^*[X])\cdot [C_B]) = [X].
\]
Now,
\[
(p_1^*[X])\cdot [C_B] = a[\Xbar] + \sum a_i [E_i].
\]
For some $n$-dimensional subvarieties $E_i\subset C_B$ with $a,a_i>0$ (we know the $a_i>0$ as each component $C'_B$ dominates $Y$ and $X$ is a Cartier divisor). It suffices to show that $a=1$ and $(p_1)_*[E_i]=0$.

By the projection formula
\[
df+\ell f \ge df + [E]\cdot (p_2)*[C_b] = (p_1^*[X])\cdot [C_b].
\]
Expanding $p_1^*[X]$ gives:
\begin{align*}
(p_1^*[X])\cdot [C_b] &= a[\Xbar]\cdot [C_b]+\sum a_i [E_i]\cdot [C_b]\\
 &\ge a [\Xbar]\cdot [C_b]=a|\Gamma|.
\end{align*}
Let $G$ and $H$ be the functions from Banerjee's Theorem (Theorem~\ref{Banerjee}). As $d\gg 0$ we may assume $d-\zeta \ge G(f-1)$. Since $\Gamma$ satisfies $\CB(d-\zeta)$ and is not contained in a curve of degree $f-1$, we obtain
\begin{equation}\label{eqn:gammabound}
|\Gamma| \ge (d-\zeta)f-H(f-1). \end{equation}
Thus:
\[
df+\ell f\ge a|\Gamma|\ge a((d-\zeta)f-H(f-1)).
\]
If $d\gg 0$, this implies $a=1$.

It remains to show that $(p_1)_*[E_i]=0$ for each $[E_i]$. There are two cases: (1) the map $E_i\ra B$ is a contraction, or (2) the map $E_i\ra B$ is dominant.  In the case (1), if $p_1|_{E_i}$ is generically finite this implies that $X$ is swept out by curves of degree $\le e$ which contradicts Lemma~\ref{degreebound}.

In case (2), note that the general fiber of $E_i$ satisfies $\CB(d-\zeta)$ (by \cite[Thm. 2.3]{BCD}). Therefore, the degree of $E_i\ra B$ is at least $d-\zeta+2$. Now we redo the computation that showed $a=1$ again assuming assumption (1), this time accounting for $E_i$:
\begin{align*}
d f + \ell f\ge df+ (p_1^*[E])\cdot[C_b]&=(p_1^*[X])\cdot [C_b]\\
 &\ge [\Xbar]\cdot [C_b]+a_i [E_i]\cdot [C_b]\\
 &\ge |\Gamma|+d-\zeta+2\\
 &\ge (d-\zeta)f-H(f-1) + d-\zeta+2.
\end{align*}
This is impossible if $d\gg 0$.

Thus the map $\phi$ factors through a fibration $\psi\cl Y\dra \P^n$ and it follows that $f=e$. This completes the proof of (1).

To prove (2), we again reduce to the case $A$ very ample, and the lower bound on $\irr(X)$ then comes out of Eq.~\eqref{eqn:gammabound} (setting $f=e$). To prove (3), we also reduce to the case $A$ is very ample and in this case the effective parts are all $\Q$-multiples of $A$ so we are done by Lemma~\ref{degreebound}. The proof of (4) is just the observation that when $\psi$ is a regular morphism:
\[
\deg_{[X]}(\psi) = \deg(\psi|_{X}).\qedhere
\]
\end{proof}

\begin{remark}
When the divisor $A$ is very ample, we can make the lower bound on $d$ explicit in terms of the functions $G(e)$ and $H(e)$ in Theorem~\ref{Banerjee} and the other constants introduced in the section. Let

\[
d_0:=\max\left\{\begin{array}{rl}
G(e)+\zeta,(e+1)\zeta+e\ell+H(e), \zeta+e-1&(\text{Lem.~\ref{degreebound}})\\
e+\zeta+M\cdot e -1, e^2+e+\zeta-1,&(\text{Lem.~\ref{contY}})\\
2(\ell+\zeta+H(e-1))&(\text{proof of Thm.~\ref{thmA}})\\
2\zeta+\ell+H(e-1)-1&\\
(e+1)\zeta+e\ell+H(e-1)-1&
\end{array} \right\}.
\]
If $d\ge d_0$, then the map $\phi\cl X\dra \P^n$ factors through a minimal degree fibration of $Y$ and
\[
\irr(X) > d\cdot e-\zeta\cdot e -H(e-1).
\]
\end{remark}

\begin{remark}\label{PicocoQRemark}
If we further assume the answer to Question~\ref{PicocoQ} is yes for degrees up to $e$ then we can make these bounds more concrete. In this case $G(e)=0$ and $H(e)=e^2-e-1$. Set
\[
d'_0:=\max\left\{ \zeta+M\cdot e-1, (e+1)(\zeta+\ell)+2(e^2-e-1), \zeta+(e+3)\ell + 2e^2\right\}.
\]
If $d>d'_0$ then any map computing $\irr(X)$ factors through a minimal degree fibration of $Y$ and $\irr(X)>d\cdot e-\zeta\cdot e -e^2+e+1.$
\end{remark}

\section{The degree of irrationality of unbalanced complete intersections}

In this section we apply Theorem~\ref{thmA} to compute the degree of irrationality in various examples. To start we consider complete intersections of unbalanced degrees. Here we are only concerned with the asymptotic results, so we do not attempt to optimize the stated inequalities.

\begin{proof}[Proof of Theorem B]
We proceed by induction on the codimension. By \cite[Thm.~A]{BDELU}, for hypersurfaces $X_{d_1}\subset \P^{N}$ it is known that
\[
d_1\ge \irr(X)\ge d_1-N+1.
\]
Let $\epsilon >0$, if $d_1$ is large enough then the right hand side is at least $(1-\epsilon)d_1$. (Alternatively, we can apply Theorem~\ref{thmA} to $\P^N$ using that $\mfd_1(\P^N) = 1$.)

Now assume $\epsilon >0$ is small and define $\epsilon'>0$ by $1-\epsilon' = \sqrt{1-\epsilon}$. Suppose $d_1, \ldots, d_{r-1}$ are such that a general complete intersection $Y$ of these degrees has
h
\[
\irr(Y)\ge (1-\epsilon')d_1\cdots d_{r-1}.
\]
Then by Lemma~\ref{mfdelemprops}(6) we have
\[
d_1\cdots d_{r-1} = \mfd_{N-r+1}(Y,H)\ge \mfd_1(Y,H) \ge (1-\epsilon')d_1\cdots d_{r-1}.
\]
Therefore by Theorem~\ref{thmA} applied to $(Y,H)$, if $d_r$ is sufficiently large, then
\[
d_1\cdots d_r \ge \irr(X) \ge d_r (1-\epsilon')d_1\cdots d_{r-1}-O(1).
\]
Again by taking $d_r$ sufficiently large, we have
\[
d_r (1-\epsilon')d_1\cdots d_{r-1}-O(1)\ge (1-\epsilon')^2 d_1\cdots d_r = (1-\epsilon) d_1\cdots d_r.\qedhere
\]
\end{proof}

Now we make these results more explicit in the case of general complete intersection surfaces in $\P^4$ of degree $(4,d)$ and $(5,d)$ for $d$ sufficiently large.

\begin{lemma}
If $Y\subset \P^4$ is a smooth hypersurface of degree $4$ or $5$ and $H$ is the hyperplane class on $Y$, then
\[
\mfd_1(Y,H) = \deg(Y)-1.
\]
Each fiber of such a minimal degree fibration spans a plane. If $Y$ is general, then any minimal degree fibration is equivalent (up to postcomposing by a birational automorphism of $\P^2$) to projection from a line.
\end{lemma}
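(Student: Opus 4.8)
Write $a=\deg(Y)\in\{4,5\}$, so $\dim Y=3$ and $\omega_Y=\Oc_Y(a-5)$, and the claim is $\mfd_1(Y,H)=a-1$. My plan has four steps: an upper bound by projecting from a line; a lower bound obtained by restricting an arbitrary fibration to a general hyperplane section; an analysis of those hyperplane sections forcing the minimal fibers onto planes; and a final reduction identifying the minimal fibrations for general $Y$.

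\emph{Upper bound.} Every smooth quartic or quintic threefold in $\P^4$ contains a line $L$: the Fano scheme of lines is the zero locus on $\mathrm{Gr}(2,5)$ of a section of a rank-$(a+1)$ bundle whose top Chern class is nonzero. Projecting from $L$ gives $\pi_L\cl Y\dra\P^2$ whose general fiber is the residual curve $R$ to $L$ in a plane section $P\cap Y$; since $\deg(P\cap Y)=a$ and $L$ is one component, $\deg R=a-1$ and $R$ spans the plane $P$. Hence $\mfd_1(Y,H)\le a-1$, and there is a minimal fibration with planar fibers.

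\emph{Lower bound and planarity.} Let $\psi\cl Y\dra\P^2$ be any dominant rational map with general fiber $C$ of degree $f$. For general $H'\subset\P^4$ the surface $S=H'\cap Y$ is smooth and $\psi|_S\cl S\dra\P^2$ is generically finite of degree $f$, with general fiber $\Gamma=C\cap H'$. Since $\dim Y=3$, Grothendieck--Lefschetz gives $\NS(Y)=\Z H$; as $p_g(S)>0$, a Noether--Lefschetz theorem gives $\NS(S)=\Z H_S$ with $H_S^2=a$ for general $H'$. Writing any rational map $S\dra\P^2$ through a two-dimensional linear subsystem of some $|kH_S|$ and computing its degree as (moving self-intersection) minus (length of the isolated base locus) --- which is controlled by B\'ezout since $\NS(S)=\Z H_S$ --- one finds $\irr(S)=a-1$ and that every degree-$(a-1)$ map $S\dra\P^2$ is, up to a birational automorphism of $\P^2$, projection from a point of $S$. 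In particular $f\ge\irr(S)=a-1$, so together with the upper bound $\mfd_1(Y,H)=a-1$. When $f=a-1$, the map $\psi|_S$ computes $\irr(S)$, hence is such a point-projection, so its fibers --- and therefore each $\Gamma=C\cap H'$ --- consist of $a-1$ collinear points. A general hyperplane slice of a non-degenerate curve in $\P^k$ spans the corresponding general hyperplane of $\P^k$; so if $C$ spanned $\P^k$ with $k\ge 3$, a general $\Gamma=C\cap H'$ would span a linear space of dimension $\ge 2$ and fail to be collinear. Thus $C$ spans a plane.

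\emph{Minimal fibrations for general $Y$.} For a minimal fibration the general fiber $C_b$ spans a plane $P_b$ with $P_b\cap Y=C_b+L_b$ for a line $L_b\subset Y$; the assignment $b\mapsto L_b$ is then a rational map from $\P^2$ into the Fano scheme of lines of $Y$, which for general $Y$ is a finite set when $a=5$ and a non-rational curve when $a=4$. Either way $b\mapsto L_b$ is constant, so all the $P_b$ contain a fixed line $L$, and $\psi$ and $\pi_L$ have the same fibers; hence $\psi$ equals $\pi_L$ up to a birational automorphism of $\P^2$. The main obstacle is the middle step: the Noether--Lefschetz input guaranteeing $\NS(S)=\Z H_S$ for a general hyperplane section of \emph{every} smooth $Y$, the explicit determination of $\irr(S)$ and of the optimal maps for quartic K3 and quintic surfaces, and the care needed to handle reducible or non-reduced general fibers; by contrast the upper bound and the final reduction are routine.
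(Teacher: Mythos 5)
Your upper bound (projection from a line) and your final step (the residual-line map to the Fano scheme is constant because $F(Y)$ is a positive-genus curve for a general quartic and finite for a general quintic) coincide with the paper's argument. The middle step is where you diverge, and it is where the gap lies. The paper proves the lower bound and planarity by classifying the possible fiber curves directly: an irreducible curve of degree $\le 3$ (resp.\ $\le 4$) in $\P^4$ is rational unless it is a plane cubic (resp.\ a plane quartic or a geometric-genus-one curve), fibrations in rational curves are excluded by birational superrigidity of the quartic threefold (Iskovskikh--Manin) and by non-uniruledness of the quintic, and genus-one fibrations of the quintic are excluded by Grassi's theorem. You replace all of this by the assertion that a general hyperplane section $S=Y\cap H'$ has $\irr(S)=a-1$ \emph{and} that every degree-$(a-1)$ rational map $S\dra\P^2$ is a point projection up to Cremona, and you present this as a routine computation ``controlled by B\'ezout since $\NS(S)=\Z H_S$.'' It is not.

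Concretely: (i) a rational map $S\dra\P^2$ comes from a $2$-dimensional sub-linear-system of some $|kH_S|$ with base points, and its degree is $k^2a$ minus local multiplicities at the base locus; bounding this below by $a-1$ and classifying the equality cases is exactly the hard content here. For $a=4$, $S$ is a quartic K3 and $\irr(S)\ge 3$ amounts to showing a Picard-rank-one quartic K3 is not birationally a double plane --- a genuine theorem whose known proof goes through non-symplectic involutions and lattice theory, not B\'ezout. For $a=5$ the $\BVA$/Cayley--Bacharach method gives $\irr(S)\ge a-2$ essentially for free; the improvement to $a-1$ and the classification of the optimal maps as point projections is the substance of \cite[Thm.~C]{BDELU} and is proved there for (very) general surfaces in $\P^3$. (ii) Hyperplane sections of a fixed $Y$ form only a $4$-dimensional family inside the much larger moduli of quartic/quintic surfaces, so those ``very general surface'' theorems do not apply directly; you would need both a Noether--Lefschetz theorem for very general hyperplane sections of a \emph{fixed} smooth threefold (a Moishezon/Green-type statement, not the classical one) and a check that Picard rank one is the only hypothesis the classification theorems require. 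Note also that the lemma asserts $\mfd_1(Y,H)=a-1$ and planarity for \emph{every} smooth $Y$, while your route would at best reach general $Y$. (iii) You defer the reducible/non-reduced fiber issue rather than handling it. As written the proposal is therefore not a proof; the paper's route through superrigidity, non-uniruledness, and Grassi's theorem sidesteps the surface computation entirely.
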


\begin{proof}
First, $Y$ contains lines and projection from a line shows $\mfd_1(Y,H)\le \deg(Y)-1$.

Suppose $Y$ has degree 4. It is known that $Y$ is birationally superrigid (this was proved in \cite{IM71}, see \cite{Kollar19} for a modern perspective), so $Y$ not birational to a fibration in rational curves. Thus the only fibrations
\[
\psi\cl Y\dra \P^2
\]
of degree $\le 3$ have fibers whose closures are plane cubics.

Suppose $Y$ has degree 5. Then $Y$ is not uniruled, so there is no fibration in rational curves. The only nonrational curves of degree $\le 4$ are either curves of geometric genus 1 or plane quartics. By \cite[Thm.~1.8]{Grassi91}, $Y$ does not admit a rational fibration in genus 1 curves.

Now if $Y$ is a general quartic, the Fano curve $F(Y)$ has positive genus. As a minimal degree fibration $\psi\cl Y\dra \P^2$ is fibered in plane curves of degree $\deg(Y)-1$, this determines a rational map
\[
\P^2\dra F(Y)
\]
that sends a point $b\in \P^2$ to the residual line. This map must be constant, which proves the last part for quartics. If $Y$ is a general quintic, then $F(Y)$ is finite, so similarly the map must be constant.
\end{proof}

\begin{corollary}
Let $Y\subset \P^4$ be a general smooth hypersurface of degree $a=4$ or $5$. Let $|H|$ be the hyperplane linear series on $Y$ and let $X\in |dH|$ be a smooth surface. If $d\ge 2a^2-5a+7$ then
\[
\irr(X) = \left\{\begin{array}{rl} (d-1)(a-1)  &  \text{if $X$ contains a line},\\ d(a-1) &\text{otherwise.}  \end{array}\right.
\]
Moreover, these degree of irrationality maps factor through the minimal degree fibrations described in the previous Lemma.
\end{corollary}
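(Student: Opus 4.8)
The plan is to feed the preceding Lemma into Theorem~\ref{thmA} (in the explicit form of Remark~\ref{PicocoQRemark}) and then compute directly the degree of the restriction to $X$ of a projection from a line in $Y$.

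First, by the preceding Lemma, $Y$ being general of degree $a\in\{4,5\}$ gives $e:=\mfd(Y,H)=a-1$, and every minimal-degree fibration of $(Y,H)$ is, up to postcomposition with a Cremona transformation of $\P^2$, the projection $\psi_L\colon Y\dra\P^2$ from a line $L\subset Y$. I would check that $d\ge 2a^2-5a+7$ is precisely the constant $d'_0$ of Remark~\ref{PicocoQRemark} here: $E=0$ so $\ell=0$; $\omega_Y=(a-5)H$ gives $\zeta=5-a$; the single defining hypersurface gives $M=a$; and $e=a-1$. Using Picoco's results (which settle Question~\ref{PicocoQ} in the small degrees $\le a-1$ at issue, so $G(e)=0$, $H(e)=e^2-e-1$) and substituting, the three quantities defining $d'_0$ become $a^2-2a+4$, $a^2-a+2$ and $2a^2-5a+7$, the last being the largest. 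So Remark~\ref{PicocoQRemark} applies: any $\phi$ with $\deg\phi=\irr(X)$ factors through some $\psi_L|_X$ (a Cremona transformation being birational), and since $\psi_{L'}|_X$ is, for every line $L'\subset Y$, a competing dominant rational map $X\dra\P^2$, this forces
\[
\irr(X)=\min\{\,\deg(\psi_{L'}|_X)\mid L'\subset Y\text{ a line}\,\}
\]
(a smooth hypersurface of degree $4$ or $5$ in $\P^4$ does contain lines).

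Second, I would compute $\deg(\psi_{L'}|_X)$, writing $H$ also for the hyperplane class on $X$ (so $H^2=\deg X=da$ and $H\cdot L'=1$). The map $\psi_{L'}|_X$ is given by the net of hyperplane sections of $X\subset\P^4$ containing $L'$. If $L'\not\subset X$, this net has base scheme $X\cap L'$, cut out on the smooth curve $L'$ by a single degree-$d$ form (as $X\cdot L'=d$), hence curvilinear of length $d$; resolving it by a chain of point blow-ups on the smooth surface $X$ gives $\deg(\psi_{L'}|_X)=H^2-d=d(a-1)$. If $L'\subset X$, then $L'$ is a fixed component and, after removing it, the map is given by $|H-L'|$; by adjunction $K_X=(a+d-5)H$ and $(L')^2=-2-K_X\cdot L'=3-a-d$, so once $|H-L'|$ is base-point-free one gets $\deg(\psi_{L'}|_X)=(H-L')^2=da-2+(3-a-d)=(d-1)(a-1)$. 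Hence: if $X$ contains no line, every line of $Y$ lies off $X$ and $\irr(X)=d(a-1)$; if $X$ contains a line $L_0$ (necessarily $L_0\subset Y$), the smaller value $(d-1)(a-1)$ is attained at $L_0$ while every other line gives $(d-1)(a-1)$ or $d(a-1)$, so $\irr(X)=(d-1)(a-1)$. The factorization through a minimal-degree fibration is part of Theorem~\ref{thmA}(1).

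The step requiring the most care is the case $L'\subset X$: one must verify that $|H-L'|$ on the merely smooth surface $X$ is base-point-free for $d\ge 2a^2-5a+7$, so that $(d-1)(a-1)$ is genuinely the degree and not merely an upper bound that a hidden base point could lower. I would get this from Serre vanishing ($h^1(X,(a+d-6)H+L')=0$, which forces $h^0(X,H-L')=3$, so $|H-L'|$ is precisely the net of residual curves) together with the observation that a common point $q$ of all residual curves would have to lie on $L'$ and make every hyperplane through $L'$ tangent to $X$ at $q$ — impossible, since the hyperplanes through $L'$ form a strictly larger family than those containing $T_qX$. The remaining verifications (the exact value of $d'_0$, and that blowing up the curvilinear base scheme lowers the degree by exactly its length $d$) are routine.
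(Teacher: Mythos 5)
Your argument is correct and follows the paper's proof exactly: verify the constants $e=a-1$, $\zeta=5-a$, $\ell=0$, $M=a$, invoke Remark~\ref{PicocoQRemark} via Picoco's affirmative answer to Question~\ref{PicocoQ} in low degrees, and then compute the degree of the restriction to $X$ of projection from a line of $Y$, splitting according to whether the line lies on $X$. The paper compresses the final step to ``the result follows from a computation''; you carry it out, including the base-point-freeness check for $|H-L'|$ when $L'\subset X$, which is indeed the point needing care.
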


\begin{proof}
With the notation from $\S3$, by the previous Lemma we have $e=a-1$, $\zeta = 5-a$, $\ell = 0$, and $M=a$. Picoco has proved (\cite[Thm. A]{Picoco}) that the answer to Question~\ref{PicocoQ} is yes for degrees up to $4$ so by Remark~\ref{PicocoQRemark} (after a computation) when $d\ge 2a^2-5a+7$, any map computing $\irr(X)$ factors through a minimal degree fibration of $Y$. As any minimal degree fibration of $Y$ is given by projection from a line in $Y$ (up to postcomposing by a Cremona transformation of the plane), the result follows from a computation.
\end{proof}

\bibliographystyle{siam} 
\bibliography{Biblio}

\end{document}